\newtheorem{theorem}{Theorem}[section]
\newtheorem{lemma}[theorem]{Lemma}
\newtheorem{remark}[theorem]{Remark}
 \numberwithin{dummy}{section}
\newtheorem{algorithm}{Algorithm}
\newtheorem*{algorithm-PF}{Primal Formulation}
\newtheorem*{algorithm-PMF}{Primal-Mixed Formulation}
\newtheorem*{algorithm-DMF}{Dual-Mixed Formulation}
\newtheorem*{algorithm-PF-C}{Conforming Finite Element Method}
\newtheorem*{algorithm-PMF-C}{Conforming Primal-Mixed Finite Element Method}
\newtheorem*{algorithm-DMF-C}{Conforming Mixed Finite Element Method}
\newtheorem*{algorithm-hmfem}{Hybridized Mixed Finite Element Method}
\newtheorem*{algorithm-primal-wgfem}{Primal WG-FEM}
\newtheorem*{algorithm-primalmixed-wgfem}{Primal-Mixed WG-FEM}
\newtheorem*{algorithm-mixed-wgfem}{Mixed WG-FEM}
\newtheorem*{algorithm-hmwgfem}{Hybridized Mixed WG-FEM}
\newtheorem*{algorithm-hdg}{Hybridizable Discontinuous Galerkin}
\numberwithin{equation}{section}
\newcommand{\bq}{{\bm q}}
\newcommand{\bn}{{\bm n}}
\newcommand{\bx}{{\bm x}}
\newcommand{\bu}{{\bm u}}
\newcommand{\bv}{{\bm v}}
\newcommand{\bE}{{\textbf{e}}}
\newcommand{\bV}{{\boldsymbol V}}
\newcommand{\bW}{{\boldsymbol W}}
\newcommand\subsetsim{\mathrel{%
  \ooalign{\raise0.2ex\hbox{$\subset$}\cr\hidewidth\raise-0.8ex\hbox{\scalebox{0.9}{$\sim$}}\hidewidth\cr}}}
\def\leqC{\lesssim}
\def\T{{\mathcal T}}
\def\bn{{\boldsymbol n}}
\def\bq{{\boldsymbol q}}
\def\boldeta{{\boldsymbol\eta}}
\def\bpsi{{\boldsymbol\psi}}
\def\bvarphi{{\boldsymbol\varphi}}
\def\bbQ{\mathbb{Q}}
\def\mathcalQ{{\mathcal Q}}
\def\Wspace{{\mathbb{W}_\varepsilon(\Omega)}}
\def\boldeta{\bm{\eta}}
\newcommand{\pT}{{\partial T}}
\newcommand{\curl}{{\nabla\times}}
\def\3bar{{|\hspace{-.02in}|\hspace{-.02in}|}}
\renewcommand{\ldots}{\dotsc}
\newcommand{\vertiii}[1]{{\left\vert\kern-0.25ex\left\vert\kern-0.25ex\left\vert #1
\right\vert\kern-0.25ex\right\vert\kern-0.25ex\right\vert}}
\theoremstyle{definition}
\newtheorem{example}[theorem]{Example}
\theoremstyle{remark}
\numberwithin{equation}{section}
\newcommand{\tnorm}{\@ifstar\@tnorms\@tnorm}
\newcommand{\@tnorm}[2][]{%
\mathopen{#1|\mkern-2.5mu#1|\mkern-2.5mu#1|}
#2
\mathclose{#1|\mkern-2.5mu#1|\mkern-2.5mu#1|}
}
\begin{document}
\title{An $L^p$- Primal-Dual Weak Galerkin method for div-curl Systems}
 
\author{Waixiang Cao} \address{School of Mathematical Science, Beijing Normal University, Beijing 100875}
\email{caowx@bnu.edu.cn} \thanks{ The research of Waixiang Cao was partially supported by National Science Foundation of China grant No. 11871106} 
\author{Chunmei Wang}
\address{Department of Mathematics, University of Florida, Gainesville, FL 32611} \email{chunmei.wang@ufl.edu} \thanks{The research of Chunmei Wang was partially supported by National Science Foundation Award DMS-2136380.}
\author{Junping Wang}
\address{Division of Mathematical Sciences, National Science
Foundation, Alexandria, VA 22314}
\email{jwang@nsf.gov}
\thanks{The research of Junping Wang was supported by the NSF IR/D program, while working at
National Science Foundation. However, any opinion, finding, and
conclusions or recommendations expressed in this material are those
of the author and do not necessarily reflect the views of the
National Science Foundation.}

\begin{abstract} 
This paper presents a new $L^p$-primal-dual weak Galerkin (PDWG) finite element method for the div-curl system with the normal boundary condition for $p>1$. Two crucial features for the proposed $L^p$-PDWG finite element scheme are as follows: (1) it offers an accurate and reliable numerical solution to the div-curl system under the low $W^{\alpha, p}$-regularity ($\alpha>0$) assumption for the exact solution; (2) it offers an effective approximation of the normal harmonic vector fields on domains with complex topology.  An optimal order error estimate is established in the $L^q$-norm for the primal variable where $\frac{1}{p}+\frac{1}{q}=1$.
A series of  numerical experiments are  presented to demonstrate the performance of the proposed $L^p$-PDWG algorithm. 
\end{abstract}

\keywords{finite element methods, weak Galerkin methods, primal-dual weak Galerkin, div-curl system}

\subjclass[2020]{Primary 65N30, 35Q60, 65N12; Secondary 35F45, 35Q61}

\pagestyle{myheadings}

\maketitle

\section{Introduction}\label{Section:Introduction}
In this paper we shall develop a new $L^p$-primal-dual weak Galerkin (PDWG)  methods for the div-curl system with the normal boundary condition. To this end, we consider the model problem: Find a vector field $\bm{u}=\bm{u}(\bx)$ such that
%\begin{eqnarray}%\label{EQ:div-curl}
%\nabla\cdot(\varepsilon \bm{u})&=f,\qquad {\rm in}\  \Omega, \label{EQ:div-curl:div-eq}\\
%\nabla\times \bm{u}&= \bm{g},\qquad {\rm in}\  \Omega, \label{EQ:div-curl:curl-eq}\\
%\varepsilon \bm{u} \cdot \bm{n}&=\phi_1, \qquad {\rm on}\ \Gamma, \label{EQ:div-curl:normalBC}
%\end{eqnarray}
\begin{subequations}\label{EQ:div-curl}
\begin{align}
\nabla\cdot(\varepsilon \bm{u})&=f,\qquad {\rm in}\  \Omega, \label{EQ:div-curl:div-eq}\\
\nabla\times \bm{u}&= \bm{g},\qquad {\rm in}\  \Omega, \label{EQ:div-curl:curl-eq}\\
\varepsilon \bm{u} \cdot \bm{n}&=\phi_1, \qquad {\rm on}\ \Gamma, \label{EQ:div-curl:normalBC}
\end{align}
\end{subequations}
where $\Omega\subset {\mathbb R}^3$ is an open, bounded and connected polyhedral domain, and $\Gamma=\partial \Omega$ is the boundary of $\Omega$. Assume that $\Gamma$ is the union of a finite number of disjoint surfaces $\Gamma=\bigcup_{i=0}^L\Gamma_i$ with $\Gamma_0$ being the exterior boundary of $\Omega$ and $\Gamma_i \ (i=1,  \cdots, L)$ being the other connected components with finite surface areas. Note that $L$ is equal to the number of holes in the
domain $\Omega$ geometrically which is known as the second Betti number of $\Omega$ or the dimension of the second de Rham cohomology group of $\Omega$. Assume the coefficient matrix $\varepsilon= \{\varepsilon_{ij}(\bx)\}_{3\times 3}$ is symmetric and uniformly positive definite in $\Omega$ with   $\varepsilon_{ij}$ ($i, j=1,2,3$) being in $L^{\infty}(\Omega)$. 

%and is known as the second Betti number of $\Omega$ or the dimension of the second de Rham cohomology group of $\Omega$. 

The solution uniqueness for the div-curl system \eqref{EQ:div-curl:div-eq}-\eqref{EQ:div-curl:normalBC} depends on the topology of the domain $\Omega$. It is well-known that the solution uniqueness holds true for simply connected $\Omega$, while the solution is unique up to a normal $\varepsilon$-harmonic function in $\mathbb{W}_0^{\varepsilon n,p}(\Omega)$ defined in (\ref{harmo}) for the case that the domain $\Omega$ is not simply connected. The dimension of $\mathbb{W}_0^{\varepsilon n,p}(\Omega)$ is the first Betti number of $\Omega$ which is 
the rank of the first homology group of $\Omega$. 

The div-curl system (\ref{EQ:div-curl:div-eq})-(\ref{EQ:div-curl:curl-eq}) arises in many applications in science and engineering such as electromagnetic fields and fluid mechanics. Computational electro-magnetics plays an important role in many areas 
such as radar, satellite, antenna design, waveguides, optical fibers, medical imaging and design of invisible cloaking devices \cite{r9}. In linear magnetic fields, the function $f(\bx)$ vanishes, $\bu$ represents the magnetic field intensity and $\varepsilon(\bx)$ is the inverse of the magnetic permeability tensor. In fluid mechanics fields, the coefficient matrix $\varepsilon(\bx)$ is diagonal with diagonal entries being the local mass density.  In electrostatics fields, $\varepsilon(\bx)$ is the permittivity matrix. 

There have been several numerical methods  proposed and analyzed for the div-curl system (\ref{EQ:div-curl:div-eq})-(\ref{EQ:div-curl:curl-eq}). A covolume method was developed  by the employment of the Voronoi-Delaunay mesh pairs in three dimensional space \cite{r17}. \cite{r3} developed a least-squares finite element method for two types of boundary value problems.  The least-squares method was proposed  in \cite{r2} for the div-curl problem based on discontinuous elements on nonconvex polyhedral domains. 
 In \cite{3}, a classical numerical method was introduced for solving the magnetostatic problem by employing a scalar or vector potential. The control volume method \cite{15}  was proposed directly for planar div-curl problems. \cite{9} proposed a discrete duality finite volume method for div-curl problems on almost arbitrary polygonal meshes. 
A mixed finite element method was introduced in  \cite{r8} for three dimensional axisymmetric div-curl systems through a dimension reduction technique based on the cylindrical coordinates in simply connected and axisymmetric domains. The mimetic finite difference scheme \cite{5, r11}  was introduced for the magneto-static problems on general polyhedral partitions. The numerical algorithm \cite{17} was designed to construct a finite element basis for the first de Rham cohomology group of the computational domain, which was further used for a numerical approximation of the magnetostatic problem.       \cite{r23}  proposed a weak Galerkin finite element method for the div-curl system with either normal or tangential boundary conditions.  Another weak Galerkin scheme was introduced in \cite{li} by using a least-squares approach for the div-curl problem.  \cite{wl, divcurl} developed  primal-dual weak Galerkin finite element methods for the div-curl system with tangential boundary condition and normal boundary condition respectively and proved that the schemes work well for the exact solution with low-regularity assumptions.

There are two main challenges in the approximation of the div-curl system (\ref{EQ:div-curl:div-eq})-(\ref{EQ:div-curl:normalBC}): (1) the low-regularity of the exact solution $\bu$  limiting the stability and accuracy of the numerical solutions, and (2) the non-uniqueness of the solution $\bu$ on domains with complex topology. The later one can be relaxed to certain extent by seeking a particular solution orthogonal to the space of normal $\varepsilon$-harmonic vector space $\mathbb{W}_0^{\varepsilon n,p}(\Omega)$, but with an immediate obstacle lying in the determination of the space $\mathbb{W}_0^{\varepsilon n,p}(\Omega)$ or an effective approximation of this space. To address these challenges, we shall devise a new $L^p$ primal-dual weak Galerkin (PDWG) scheme for (\ref{EQ:div-curl:div-eq})-(\ref{EQ:div-curl:normalBC}) by following the framework developed in \cite{divcurl}.   It should be noted that the $L^p$-PDWG framework was originated in \cite{cao}   for  convection-diffusion equations. 
Our $L^p$-PDWG numerical method for (\ref{EQ:div-curl:div-eq})-(\ref{EQ:div-curl:normalBC}) has two prominent features over the existing numerical methods: (1)  it offers an effective approximation for the normal $\varepsilon$-harmonic vector space $\mathbb{W}_0^{\varepsilon n, p}  (\Omega)$ regardless of the topology of the domain $\Omega$; and (2) it provides an accurate and reliable numerical solution for the div-curl system (\ref{EQ:div-curl:div-eq})-(\ref{EQ:div-curl:normalBC}) with low $W^{\alpha, p}$-regularity ($\alpha>0$) assumption for the exact solution $\bu$. 

The paper is organized as follows. In Section \ref{Section:2}, we introduce the notation and  derive the weak formulation for the div-curl system \eqref{EQ:div-curl:div-eq}-\eqref{EQ:div-curl:normalBC}.
In Section \ref{Section:4}  a $L^p$-PDWG algorithm 
for both the div-curl problem and  the discrete normal $\varepsilon$-harmonic vector fields
 is proposed.  
  The solution existence and uniqueness for the $L^p$-PDWG scheme is discussed in Section \ref{Section:5}.  % The error equations are derived in  Section \ref{Section:6} for the $L^p$-PDWG algorithm. 
  The convergence theory for the $L^p$-PDWG approximation is established in Section \ref{Section:7}. Finally,      several test examples are demonstrated to illustrate the performance of the $L^p$-PDWG algorithm in  Section \ref{Section:8}. 

\section{Weak formulations}\label{Section:2}
\subsection{Notations}
  We follow the usual notations for Sobolev spaces and norms \cite{ciarlet, girault-raviart}. Let   $D \subset {\mathbb R}^3$ be an open bounded domain  with Lipschitz continuous boundary. 
%For any given real number $s\geq 0$,  denote by  $\|\cdot\|_{s,D}$ and $|\cdot|_{s,D}$  the norm and seminorm in the Sobolev space $H^s(D)$, respectively. 
% The space $H^0(D)$ coincides with $L^2(D)$, for which the norm and the inner product are denoted by $\|\cdot\|_D$ and $(\cdot,\cdot)_{D}$, respectively. 
Denote by $W^{div_\varepsilon, p}(D)$  the closed subspace of $[L^p(D)]^3$ such that $\nabla\cdot(\varepsilon\bv)\in L^p(D)$. Denote $W^{div_\varepsilon, p}(D)$ by $W^{div, p}(D)$ when $\varepsilon=I$. Analogously, we use $W^{curl, p}(D)$ to denote the closed subspace of $[L^p(D)]^3$ so that $\nabla\times\bv\in [L^p(D)]^3$. Denote by $W_0^{curl, p}(D)$ the closed subspace with vanishing tangential boundary values, i.e., 
$$W_0^{curl, p}(D):=\{\bv\in W^{curl, p}(D),\ \bv\times\bn =0 \mbox{ on } \partial D\}.$$
%When $D=\Omega$, we shall drop the script $D$ in the notations. 
Denote by $\langle\cdot,\cdot\rangle_{\Gamma_i}$ the inner product in $L^2(\Gamma_i)$.
% and  use ``$\lesssim$ '' to denote ``less than or equal to up to a general constant independent of the mesh size or functions appearing in the inequality''.
We introduce the following Sobolev space
\begin{equation*}\label{EQ:NewSSpace}
\Wspace=\{\bv\in W_0^{curl, p}(\Omega)\cap W^{div_\varepsilon, p}(\Omega), \ \nabla\cdot(\varepsilon\bv)=0,\ \langle \varepsilon\bv\cdot\bn_i, 1\rangle_{\Gamma_i}=0,\ i=1,\cdots, L\}.
\end{equation*}
 
A vector field $\bv\in [L^p(\Omega)]^3$ is defined to be $\varepsilon$-harmonic in $\Omega$ if it is $\varepsilon$-solenoidal and irrotational in $\Omega$.   Denoted by $\mathbb{W}_0^{\varepsilon n,p}(\Omega)$ the space of normal $\varepsilon$-harmonic vector fields that consists of all $\varepsilon$-harmonic vector fields satisfying vanishing  normal boundary condition, i.e.,
\begin{equation}\label{harmo}
\mathbb{W}_0^{\varepsilon n,p}(\Omega)=\{\bv\in [L^p(\Omega)]^3: \ \curl\bv=0,\
\nabla\cdot(\varepsilon \bv)=0,\ \varepsilon\bv\cdot \bn = 0 \mbox{ on } \Gamma\}.
\end{equation}
 Denote $\mathbb{W}_0^{\varepsilon n,p}(\Omega)$ by $\mathbb{W}_0^{n,p}(\Omega)$ for  $\varepsilon=I$. Similarly, denoted by $\mathbb{W}_0^{\varepsilon \tau,p}(\Omega)$ the space of tangential $\varepsilon$-harmonic vector fields that consists of all $\varepsilon$-harmonic vector fields satisfying vanishing tangential boundary condition, i.e.,
$$
\mathbb{W}_0^{\varepsilon \tau,p}(\Omega)=\{\bv\in [L^p(\Omega)]^3: \ \curl\bv=0,\
\nabla\cdot(\varepsilon \bv)=0,\ \bv\times\bn = 0 \mbox{ on } \Gamma\}.
$$

\subsection{A Weak Formulation}
Testing  \eqref{EQ:div-curl:div-eq} by any $\varphi\in W^{1, p}(\Omega)$ and using the normal boundary condition (\ref{EQ:div-curl:normalBC}) yields
\begin{equation}\label{EQ:variational-form-1}
(\bm{u}, \varepsilon \nabla \varphi)  = \langle \phi_1, \varphi\rangle - (f,\varphi), \qquad\forall \varphi\in W^{1, p}(\Omega).
\end{equation}
Testing  \eqref{EQ:div-curl:curl-eq} by any $\bm{w}\in W_0^{curl, p}(\Omega)$  gives
\begin{eqnarray}\label{EQ:variational-form-2}
(\bm{u}, \nabla \times \bm{w}) = (\bm{g},\bm{w}), \qquad \forall \bm{w}\in W_0^{curl, p}(\Omega).
\end{eqnarray}
Combining with the equations \eqref{EQ:variational-form-1} and \eqref{EQ:variational-form-2}, we obtain a weak solution   $\bm{u}\in [L^q(\Omega)]^3$ ($\frac{1}{p}+\frac{1}{q}=1$) of the div-curl system with normal boundary condition  (\ref{EQ:div-curl:div-eq})-(\ref{EQ:div-curl:normalBC})  satisfying 
\begin{eqnarray}\label{EQ:variational-form}
(\bm{u}, \varepsilon \nabla \varphi + \nabla \times \bpsi ) = (\bm{g},\bpsi)-(f,\varphi) + \langle \phi_1, \varphi\rangle,
\end{eqnarray}
for all $\varphi \in  W^{1, p}(\Omega)$ and $\bpsi \in  W_0^{curl, p}(\Omega)$.

As discussed in \cite{divcurl}, the solution to the variational problem \eqref{EQ:variational-form} is  non-unique in general.  The
 homogeneous version of \eqref{EQ:variational-form} is to seek a  $\bu\in [L^q(\Omega)]^3 (\frac{1}{p}+\frac{1}{q}=1)$  satisfying
\begin{eqnarray}\label{EQ:variational-form-homo}
(\bm{u}, \varepsilon \nabla \varphi + \nabla \times \bpsi) = 0\qquad \forall \varphi\in W^{1, p}(\Omega),\ \forall \bpsi\in  W_0^{curl, p}(\Omega). 
\end{eqnarray}
Note that the solution could be any $\varepsilon$-harmonic function in $\mathbb{W}_0^{\varepsilon n,p}(\Omega)$ which is non-unique  provided that the $\varepsilon$-harmonic space $\mathbb{W}_0^{\varepsilon n,p}(\Omega)$ has a positive dimension. The solution to the div-curl system (\ref{EQ:div-curl:div-eq})-(\ref{EQ:div-curl:normalBC}) is unique provided that the solution is  $\varepsilon$-weighted $L^2$ orthogonal to $\mathbb{W}_0^{\varepsilon n,p}(\Omega)$.

\subsection{An Extended Weak Formulation}

In this subsection, we slightly modify the  
 weak formulation \eqref{EQ:variational-form-homo} to ensure that  the solution to the homogeneous version of \eqref{EQ:variational-form} is unique.
 
We first denote by $W_{0c}^{1,p}(\Omega)$ the subspace of   $W^{1,p}(\Omega)$ with vanishing value on $\Gamma_0$ and constant values on other connected components of the boundary; i.e.,
\begin{equation*}\label{EQ:Nov-11-2014:H0c}
W_{0c}^{1,p}(\Omega)=\{\phi\in W^{1,p}(\Omega):\ \phi|_{\Gamma_0}=0, \
\phi|_{\Gamma_i}=\alpha_i, \ i=1,\ldots, L\}.
\end{equation*}
Define the following bilinear form:
\begin{equation}\label{EQ:div-curl:nbvp:bform}
B(\bu,s; \varphi,\bpsi): = (\bu, \varepsilon \nabla \varphi + \nabla \times \bpsi) + (\bpsi, \varepsilon\nabla s). 
\end{equation}

  Now the extended weak formulation for   the div-curl system  (\ref{EQ:div-curl:div-eq})-(\ref{EQ:div-curl:normalBC}) seeks $(\bu, s)\in [L^q(\Omega)]^3\times W_{0c}^{1,p}(\Omega)$ satisfying
\begin{equation}\label{EQ:weakform-4-nbvp-divcurl}
B(\bu, s; \varphi,\bpsi) = F(\varphi,\bpsi),\qquad \forall \varphi\in W^{1,p}(\Omega), \forall \bpsi\in W_0^{curl,p}(\Omega), 
\end{equation}
  where 
\begin{equation}\label{ff:1}
F(\varphi,\bpsi)= (\bm{g},\bpsi)-(f,\varphi) + \langle \phi_1, \varphi\rangle.
\end{equation}
The homogeneous dual problem of \eqref{EQ:weakform-4-nbvp-divcurl} seeks $(\lambda, \bq ) \in W^{1,p}(\Omega)/\mathbb{R} \times W_0^{curl,p}(\Omega)$ such that
\begin{equation}\label{EQ:homo-dual-problem}
B(\bv, r; \lambda, \bq)=0,\qquad \forall \bv\in [L^q(\Omega)]^3,\ \forall r\in W_{0c}^{1,p}(\Omega).
\end{equation}
 It has been proved in \cite{divcurl} that the solution to the homogeneous dual problem \eqref{EQ:homo-dual-problem} is unique. 

%\begin{theorem} \cite{divcurl}
%The solution to the homogeneous dual problem \eqref{EQ:homo-dual-problem} is unique.
%\end{theorem}

% \begin{proof}
% The problem \eqref{EQ:homo-dual-problem} can be rewritten as
% \begin{equation}\label{EQ:div-curl-nbvp:pdwg:001}
% (\bm{v}, \varepsilon \nabla \lambda + \nabla \times \bq) + (\bq, \varepsilon\nabla r) = 0
% \end{equation}
% for all $\bv\in [L^2(\Omega)]^3$ and $r\in H_{0c}^1(\Omega)$. Note that the test against $r\in H^1_{0c}(\Omega)$ and $\bm{v}=0$ ensures $\nabla\cdot(\varepsilon \bq)=0$ and $\langle \varepsilon\bq\cdot \bn, 1\rangle_{\Gamma_i}=0$ for all $i\in \{1,\cdots, L\}$. In addition, by letting $r=0$ and varying $\bm{v}\in [L^2(\Omega)]^3$ we arrive at
% $$
% \varepsilon \nabla \lambda + \nabla \times \bq = 0,
% $$
% which, by testing against $\nabla \lambda$, leads to
% $$
% (\varepsilon\nabla \lambda, \nabla \lambda)=0,
% $$
% so that $\lambda\equiv 0$ and hence
% $$
% \nabla\times\bm{q}=0.
% $$
% Thus, we have
% $$
% \bm{q}\in \mathbb{H}_{\varepsilon \tau,0}(\Omega),\quad \langle \varepsilon\bm{q}\cdot\bn_i,1\rangle_{\Gamma_i}=0\  \text{for}\ i=1,\cdots, L,
% $$
% which yields $\bm{q}\equiv 0$.
% \end{proof}
% \section{Discrete Weak Differential Operators}\label{Section:3}
 
\section{$L^p$-PDWG Scheme}\label{Section:4}
To design a $L^p$-PDWG scheme for the div-curl system  (\ref{EQ:div-curl:div-eq})-(\ref{EQ:div-curl:normalBC}), 
we first briefly review the definitions of discrete weak gradient and discrete weak curl \cite{divcurl} and then introduce some finite element spaces, which shall be used 
in our later algorithm.

Denote by ${\mathcal T}_h$  a finite element partition of the domain $\Omega$ that consists of
shape-regular polyhedra  \cite{wy3655}. Denote by ${\mathcal E}_h$ and ${\mathcal E}_h^0={\mathcal E}_h \setminus
\partial\Omega$  the set of all faces   and the set of all interior faces in ${\mathcal T}_h$ respectively. Let $h_T$ be the diameter of the element $T\in \T_h$ and $h=\max_{T\in {\mathcal T}_h}h_T$ be the meshsize of the partition $\T_h$.

  %We review the discrete weak gradient and weak curl \cite{divcurl}. 
  Let $T\in {\mathcal T}_h$ be a polyhedral domain with boundary $\partial T$. We define the space of scalar-valued weak functions on $T$ as follows
\begin{equation*}
W(T) =\{v = \{v_0, v_b\} : v_0 \in L^p(T), v_b \in L^p(\partial T)\},
\end{equation*}
where $v_0$ and $v_b$ represent  the values of $v$ in the interior and on the boundary  of $T$ respectively. Similarly, the space of vector-valued weak functions on $T$ is defined by
$$
V(T) =\{\bm{v} = \{\bm{v}_0, \bm{v}_b\} : \bm{v}_0 \in [L^p(T)]^3, \bm{v}_b \in [L^p(\partial T)]^3\}.
$$

Let $P_j(T)$ be   the polynomial  space on $T$ with total degree no more than $j$. Denote by $\bm{n}$ an unit outward normal direction  on $\partial T$. For any $v\in W(T)$, the discrete weak gradient  $\nabla_{w,j,T} v$  is defined as the unique vector-valued polynomial in $[P_j(T)]^3$ such that
\begin{equation}\label{EQ:dis_WeakGradient}
(\nabla_{w,j,T} v, \bm{\varphi})_T = -(v_0,\nabla \cdot \bm{\varphi})_T + \langle v_b , \bm{\varphi}\cdot \bn \rangle_{\partial T},\quad \forall \; \bm{\varphi} \in [P_j(T)]^{3}.
\end{equation}
Similarly, for any $\bv \in V (T)$, the discrete weak curl  $\nabla_{w,j,T}\times \bv$  is defined as the unique vector-valued polynomial in $[P_j(T)]^3$ such that
\begin{equation}\label{EQ:dis_WeakCurl}
(\nabla_{w,j,T} \times \bv, \bvarphi)_T = (\bv_0,\nabla \times \bvarphi)_T - \langle\bv_b \times \bn, \bvarphi\rangle_{\partial T}, \quad \forall \; \bm{\varphi} \in [P_j(T)]^3.
\end{equation}

For a given non-negative integer $k$, the finite element spaces are defined as follows 
\begin{equation*}
\begin{split}
\bV_h=&\{\bm{v}:\ \bm{v}|_T\in [P_k(T)]^3, \forall T\in\T_h \},\\
S_h=&\{ \{s_0,s_b\}: \ s_0|_T\in P_k(T), s_b|_\pT\in P_k(\pT), \forall T\in\T_h, s_b|_{\Gamma_0}=0, s_b|_{\Gamma_i}   \mbox{ is a constant} \},\\
M_h=&\{ \{\varphi_0,\varphi_b\}: \ \varphi_0|_T\in P_k(T), \varphi_b|_\pT\in P_k(\pT), \forall T\in\T_h,\ \int_\Omega \varphi_0 = 0\},\\
\bW_h=&\{\bm{\psi}=\{\bm{\psi}_0,\bm{\psi}_{b} \}: \ \bm{\psi}_0|_T\in [P_k(T)]^3, \bm{\psi}_{b}|_\pT\in G_k(\pT), \forall T\in\T_h, \bm{\psi}_{b}|_{\Gamma}=0\},
\end{split}
\end{equation*}
 where $G_k(\pT):=[P_k(\tau)]^3\times\bn_\tau$ is the space of polynomials of degree $k$ in the tangent space of $\pT$, 
and  $\bm{n}_{\tau}$ denotes the unit outward normal  vector  on $\tau$ with  $\tau\in \pT$.

For simplicity of notation and without confusion, for any $\sigma\in S_h$ or $\sigma\in M_h$, denote by  $\nabla_{w}\sigma$  the discrete weak gradient $\nabla _{w, k, T}\sigma$ computed  by  (\ref{EQ:dis_WeakGradient}) on  $T$, i.e.,
$$
(\nabla _{w} \sigma)|_T=\nabla _{w,k,T}(\sigma|_T), \qquad
\forall\sigma\in S_h \ \text{or}\ \sigma\in M_h.
$$
Similarly, for any $\bq\in \bW_h$, denote by $\nabla_{w} \times\bq$ the discrete weak
curl $\nabla_{w, k, T} \times \bq$ computed by (\ref{EQ:dis_WeakCurl}) on $T$, i.e.,
$$
(\nabla _{w} \times \bq)|_T=\nabla _{w, k, T} \times (\bq|_T), \qquad \forall\bq\in \bW_h.
$$

With the discrete weak gradient and discrete weak curl, an approximation of the bilinear form $B(\cdot; \cdot)$ is thus given by 
\begin{equation}\label{EQ:Bh}
B_h(\bm{v}, r; \varphi,\bpsi) = (v, \varepsilon \nabla_w \varphi+ \nabla_w \times \bpsi) + (\bpsi_0, \varepsilon\nabla_w r), 
\forall(\bm{v}, r,\varphi,\bpsi)\in \bV_h\times S_h\times M_h\times\bW_h.
\end{equation}
%for $\bu_h\in \bV_h, \ s_h\in S_h,\ \varphi\in M_h,\ \bpsi\in\bW_h$.

Now we are ready to present   the $L^p$-PDWG finite element method for the div-curl system  (\ref{EQ:div-curl:div-eq})-(\ref{EQ:div-curl:normalBC}).

\begin{algorithm}[$L^p$-PDWG Algorithm] The  $L^p$-PDWG finite element method for the div-curl system  (\ref{EQ:div-curl:div-eq})-(\ref{EQ:div-curl:normalBC})
seeks a $\bu_h\in \bV_h$, together with three auxiliary variables $s_h \in S_h$, $\lambda_h\in M_h$,  $\bq_h \in \bW_h$, such that
\begin{equation}\label{EQ:PDWG-3d:01}
\left\{
\begin{array}{rl}
s_1(\lambda_h, \bq_h;\varphi,\bpsi) + B_h(\bu_h, s_h; \varphi,\bpsi)&= F(\varphi,\bpsi),\quad \forall  \varphi\in M_h,\ \bpsi \in \bW_h, \\
-s_2(s_h, r)+B_h(\bm{v}, r; \lambda_h, \bq_h) & = 0, \qquad\qquad \forall \bm{v}\in \bV_h,\ r\in S_h.
\end{array}
\right.
\end{equation}
Here $F(\cdot,\cdot)$ is given in \eqref{ff:1}, and 
the $L^p$ stabilizer $s_1$  is defined  by  
\begin{equation*}\label{stab3}
\begin{split}
&s_1(\lambda_h, \bq_h;\varphi, \bm{\psi}) = \rho_1  \sum_{T\in \T_h}  \int_{\partial T} h_T^{1-p}|\lambda_0 -\lambda_b|^{p-1}sgn(\lambda_0 -\lambda_b)(\varphi_0 - \varphi_b)ds  \\
                                  &+ \rho_2 \sum_{T\in \T_h}  h_T^{1-p}  \int_{\partial T}|\bm{q}_0 \times \bm{n} -\bm{q}_{b} \times \bm{n}|^{p-1}sgn(\bm{q}_0 \times \bm{n} -\bm{q}_{b} \times \bm{n})(\bm{\psi}_0 \times \bm{n} -\bm{\psi}_{b} \times \bm{n})ds, 
\end{split}
\end{equation*}
and the $L^q$ stabilizer $s_2$ is defined accordingly in the space $M_h$ as follows
\begin{eqnarray*}\label{stab4}
s_2(s_h; r)=\rho_3 \sum_{T\in \T_h}h_T^{1-q}\int_{\partial T} |s_0 -s_b|^{q-1}sgn(s_0 -s_b)( r_0 - r_b)ds,
\end{eqnarray*}
where $p>1$, $q>1$ such that $\frac{1}{p}+\frac{1}{q}=1$, $\rho_i >0$ for $i=1, 2, 3$ are parameters with values at user's discretion.
\end{algorithm}

% As discussed in \cite{divcurl}, 
The above $L^p$-PDWG scheme \eqref{EQ:PDWG-3d:01} also offers an approximation of the normal $\varepsilon$-harmonic vector fields
$\mathbb{W}_0^{\varepsilon n,p}$.  Our later theoretical result (see Theorem \ref{THM:ErrorEstimate4uh})  demonstrates that 
the difference $\boldeta_h=\mathcalQ_h \bu - \bu_h$ is sufficiently close to a true normal $\varepsilon$-harmonic vector field $\boldeta$. 
Here $\mathcalQ_h$ denote the $L^2$ projection operator onto the finite element space $\bV_h$, and $\bu_h$ is the solution of \eqref{EQ:PDWG-3d:01} for the div-curl system \eqref{EQ:div-curl:div-eq}-\eqref{EQ:div-curl:normalBC}. Consequently,  
 a vector field $\boldeta_h\in \bV_h$ is said to be a 
 discrete normal $\varepsilon$-harmonic function  if there exists a vector field $\bu\in W^{div_\varepsilon, p}(\Omega)\cap W^{curl, p}(\Omega)$ satisfying
%\begin{equation*}\label{EQ:discrete-harmonic-function}
$\boldeta_h=\mathcalQ_h\bu - \bu_h.$
%\end{equation*}
% where $\bu_h$ is the solution of \eqref{EQ:PDWG-3d:01}. 

% In practical computation, a discrete normal $\varepsilon$-harmonic function can be readily obtained from \eqref{EQ:discrete-harmonic-function} by choosing a smooth vector field $\bu$ and one solving of the $L^p$-PDWG system \eqref{EQ:PDWG-3d:01}.

\section{Solution Existence and Uniqueness}\label{Section:5}
This section is dedicated to the study of solution existence and uniqueness of the $L^p$-PDWG scheme \eqref{EQ:PDWG-3d:01}. 
For simplicity, we assume that $\varepsilon$ is piecewise constant with respect to the partition $\T_h$ respectively. Note that all the results can be generalized to piecewise smooth $\varepsilon$ without any difficulty.

%We  begin with the introduction of some semi-norms and the so-called Helmholtz decomposition. 
We define the following two semi-norms; i.e.,
\begin{equation}\label{norm}
\3bar (\lambda_h, \bq_h) \3bar =\Big(s_1(\lambda_h, \bq_h;\lambda_h, \bq_h)\Big)^{\frac{1}{p}},\ \  \lambda_h\in M_h,  \ \bq_h \in \bW_h,
\end{equation}
\begin{equation}\label{norm2}
\3bar s_h \3bar =\Big(s_2(s_h; s_h)\Big)^{\frac{1}{q}}, \ \  s_h \in S_h.
\end{equation}
 
Let $Q_h$ be the projection operator onto the weak finite element space $S_h$ or $M_h$ such that
$$
(Q_h w)|_T = \{Q_0 w|_T,Q_b w|_{\pT}\},
$$
where $Q_0$  and $Q_b$ are the $L^2$ projection operators onto $P_k(T)$ and $P_k(\tau)$ on each face $\tau \in \partial T$. 
Similarly, denote by $\bbQ_0$, $\bbQ_b$ and  $\bbQ_h$  the $L^2$ projection operators onto $[P_k(T)]^3$, $G_k(\tau)=[P_k(\tau)]^3\times\bn_\tau$, and $\bW_h$, respectively.

\begin{lemma}\label{Lemma5.1} \cite{wy3655} The $L^2$ projections $Q_h$ and ${\mathcal Q}_h$ satisfy the commutative property
 \begin{equation}\label{l}
 \nabla_{w}(Q_h w) = {\mathcalQ}_h(\nabla w), \qquad \forall w\in W^{1,p}(T),
 \end{equation}
 \begin{equation}\label{l-2}
 \nabla_{w}\times(\bbQ_h \bpsi) = {\mathcal Q}_h(\nabla \times \bpsi),  \qquad \forall  \bpsi\in W^{curl, p}(T).
 \end{equation}
 \end{lemma}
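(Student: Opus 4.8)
The plan is to prove both commutator identities locally on a fixed element $T$, exploiting that for each of them the left‑ and right‑hand sides are vector‑valued polynomials in $[P_k(T)]^3$: indeed $\nabla_w(Q_h w)|_T,\ \mathcal{Q}_h(\nabla w)|_T,\ \nabla_w\times(\bbQ_h\bpsi)|_T$ and $\mathcal{Q}_h(\nabla\times\bpsi)|_T$ all lie in $[P_k(T)]^3$ by construction. Hence it suffices to show that the two sides pair identically against an arbitrary $\bvarphi\in[P_k(T)]^3$. The mechanism in each case is the same three‑step reduction: (i) expand the left‑hand side via the definition \eqref{EQ:dis_WeakGradient} (resp. \eqref{EQ:dis_WeakCurl}) of the discrete weak operator applied to the projected weak function; (ii) strip off the interior/boundary $L^2$ projections using their defining orthogonality, which is legitimate because the test quantities ($\nabla\cdot\bvarphi$, $\bvarphi\cdot\bn$, $\nabla\times\bvarphi$, $\bvarphi\times\bn$) are polynomials of the appropriate degree living in exactly the spaces onto which $Q_0,Q_b,\bbQ_0,\bbQ_b$ project; (iii) apply an element‑wise integration‑by‑parts (Green) formula to recognize the result as $(\nabla w,\bvarphi)_T$ (resp. $(\nabla\times\bpsi,\bvarphi)_T$), and then reinsert the $L^2$ projection $\mathcal{Q}_h$ onto $[P_k(T)]^3$ for free.

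For \eqref{l}: given $w\in W^{1,p}(T)$ and $\bvarphi\in[P_k(T)]^3$, \eqref{EQ:dis_WeakGradient} applied to $Q_hw=\{Q_0w,Q_bw\}$ gives $(\nabla_w(Q_hw),\bvarphi)_T=-(Q_0w,\nabla\cdot\bvarphi)_T+\langle Q_bw,\bvarphi\cdot\bn\rangle_{\pT}$. Since $\nabla\cdot\bvarphi\in P_{k-1}(T)\subset P_k(T)$ and, on each face $\tau\subset\pT$, $\bvarphi\cdot\bn|_\tau\in P_k(\tau)$ (the face is planar, $\bn$ constant on it), the definitions of $Q_0$ and $Q_b$ allow the projections to be removed, yielding $-(w,\nabla\cdot\bvarphi)_T+\langle w,\bvarphi\cdot\bn\rangle_{\pT}$, which equals $(\nabla w,\bvarphi)_T$ by Green's formula (valid as $w\in W^{1,p}(T)$). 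Finally $(\nabla w,\bvarphi)_T=(\mathcal{Q}_h\nabla w,\bvarphi)_T$ by the definition of the $L^2$ projection onto $[P_k(T)]^3$, and since $\bvarphi$ is arbitrary, \eqref{l} follows.

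For \eqref{l-2} the argument runs in parallel, now using \eqref{EQ:dis_WeakCurl} and the curl Green's formula $(\nabla\times\bpsi,\bvarphi)_T=(\bpsi,\nabla\times\bvarphi)_T-\langle\bpsi\times\bn,\bvarphi\rangle_{\pT}$; the interior term is handled exactly as before since $\nabla\times\bvarphi\in[P_{k-1}(T)]^3$. The one place where I expect the genuine work to be is the boundary term: one must verify that $\langle(\bbQ_b\bpsi)\times\bn,\bvarphi\rangle_\tau=\langle\bpsi\times\bn,\bvarphi\rangle_\tau$ for all $\bvarphi\in[P_k(T)]^3$. This is obtained from the pointwise identity $(\bm{a}\times\bn)\cdot\bvarphi=-\bm{a}\cdot(\bvarphi\times\bn)$ together with the observation that the trace $\bvarphi\times\bn$ on $\tau$ is tangential to $\tau$ with polynomial components of degree $\le k$, hence lies in $G_k(\tau)=[P_k(\tau)]^3\times\bn_\tau$; consequently pairing against $\bvarphi\times\bn$ cannot distinguish $\bpsi$ from its projection $\bbQ_b\bpsi$ onto $G_k(\tau)$. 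When $\bpsi$ is only in $W^{curl,p}(T)$, this pairing is read through the tangential‑trace duality (or one first establishes the identity for smooth $\bpsi$ and extends by density, using continuity of the tangential trace on $W^{curl,p}(T)$). Assembling (i)–(iii) then gives $(\nabla_w\times(\bbQ_h\bpsi),\bvarphi)_T=(\nabla\times\bpsi,\bvarphi)_T=(\mathcal{Q}_h(\nabla\times\bpsi),\bvarphi)_T$ for every $\bvarphi\in[P_k(T)]^3$, which is \eqref{l-2}.
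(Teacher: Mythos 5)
Your proof is correct and is essentially the same argument as the one the paper relies on: the paper itself gives no proof but cites \cite{wy3655}, where the identities are established exactly by your three-step reduction (expand the discrete weak operator on the projected function, drop the $L^2$ projections because the test quantities $\nabla\cdot\bvarphi$, $\bvarphi\cdot\bn$, $\nabla\times\bvarphi$, $\bvarphi\times\bn$ lie in the respective projection spaces, integrate by parts, and reinsert $\mathcal{Q}_h$). Your remark on reading the boundary pairing for $\bpsi\in W^{curl,p}(T)$ through the tangential-trace duality (or a density argument) is the right way to handle the only delicate point, so nothing further is needed.
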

\begin{theorem}\label{THM:helmholtz-2} \cite{divcurl} (Helmholtz Decomposition)  
For any vector-valued function $\bu\in [L^p(\Omega)]^3$, there exists a unique $\bpsi\in W_0^{curl,p}(\Omega),\
\phi\in W^{1,p}(\Omega)/\mathbb{R}$, and $\boldeta\in
\mathbb{W}_0^{\varepsilon n,p}(\Omega)$ such that
\begin{eqnarray}\label{EQ:helmholtz-2}
&&\bu =\varepsilon^{-1}\nabla\times\bpsi + \nabla\phi + {\boldsymbol\eta},\\
&&\nabla\cdot(\varepsilon\bpsi)=0,\ \langle
\varepsilon\bpsi\cdot\bn_i, 1\rangle_{\Gamma_i} = 0, \ i=1,\ldots, L.\label{EQ:helmholtz-2.2}
\end{eqnarray}
In addition, there holds  
\begin{equation}\label{EQ:helmholtz-288}
\|\bpsi\|_{W^{\rm curl, p}(\Omega)} + \|\nabla\phi\|_{L^p(\Omega)}\leqC
\|\varepsilon^{\frac{1}{p}} \bu \|_{L^p(\Omega)}.  
\end{equation}
\end{theorem}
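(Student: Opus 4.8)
The plan is to build the three components of the decomposition successively by solving three auxiliary elliptic-type problems, then to establish uniqueness and the stability bound \eqref{EQ:helmholtz-288}. First I would extract the irrotational, $\varepsilon$-solenoidal "interior" part: since $\curl(\varepsilon^{-1}\nabla\times\bpsi)$ should match $\curl\bu$, I would seek $\bpsi\in W_0^{curl,p}(\Omega)$ with $\nabla\times(\varepsilon^{-1}\nabla\times\bpsi)=\nabla\times\bu$ in a suitable weak sense, together with the gauge conditions $\nabla\cdot(\varepsilon\bpsi)=0$ and $\langle\varepsilon\bpsi\cdot\bn_i,1\rangle_{\Gamma_i}=0$. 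This is a standard curl-curl problem with a Coulomb-type gauge; existence in $W^{curl,p}$ follows from the $L^p$ theory for the Maxwell operator on Lipschitz polyhedra (de Rham complex / $L^p$ Hodge theory), and it yields the a priori bound $\|\bpsi\|_{W^{curl,p}(\Omega)}\lesssim\|\varepsilon^{1/p}\bu\|_{L^p(\Omega)}$. Next, $\bu-\varepsilon^{-1}\nabla\times\bpsi$ is curl-free by construction, so I would write it as $\nabla\phi+\boldeta$ by solving the weighted Neumann problem $\nabla\cdot(\varepsilon\nabla\phi)=\nabla\cdot(\varepsilon\bu)$ in $\Omega$ with $\varepsilon\nabla\phi\cdot\bn=\varepsilon\bu\cdot\bn$ on $\Gamma$, which is solvable in $W^{1,p}(\Omega)/\mathbb{R}$ (compatibility is automatic because the data come from a divergence) with $\|\nabla\phi\|_{L^p(\Omega)}\lesssim\|\varepsilon^{1/p}\bu\|_{L^p(\Omega)}$.

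Having defined $\boldeta:=\bu-\varepsilon^{-1}\nabla\times\bpsi-\nabla\phi$, the remaining step is to verify $\boldeta\in\mathbb{W}_0^{\varepsilon n,p}(\Omega)$. By construction $\curl\boldeta=\curl\bu-\curl(\varepsilon^{-1}\nabla\times\bpsi)=0$ in the weak sense, and $\nabla\cdot(\varepsilon\boldeta)=\nabla\cdot(\varepsilon\bu)-\nabla\cdot(\varepsilon\nabla\phi)=0$; the normal trace $\varepsilon\boldeta\cdot\bn$ makes sense in a trace space since $\nabla\cdot(\varepsilon\boldeta)\in L^p$, and it equals $\varepsilon\bu\cdot\bn-\varepsilon\nabla\phi\cdot\bn-(\nabla\times\bpsi)\cdot\bn$; the first two cancel by the Neumann boundary condition, and $(\nabla\times\bpsi)\cdot\bn=0$ follows from $\bpsi\in W_0^{curl,p}$ together with an integration-by-parts identity on $\Gamma$ (the surface-divergence of the tangential trace vanishes because $\bpsi\times\bn=0$). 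Thus $\boldeta$ lies in $\mathbb{W}_0^{\varepsilon n,p}(\Omega)$, and \eqref{EQ:helmholtz-288} follows by combining the two a priori bounds above.

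For uniqueness, suppose $\varepsilon^{-1}\nabla\times\bpsi+\nabla\phi+\boldeta=0$ with $\bpsi,\phi,\boldeta$ in the stated spaces and the gauge conditions \eqref{EQ:helmholtz-2.2} holding. Pairing against $\varepsilon$ times the same expression and integrating by parts using the boundary conditions $\bpsi\times\bn=0$, $\varepsilon\boldeta\cdot\bn=0$, and $\curl\boldeta=0$ shows the three terms are mutually $\varepsilon$-weighted $L^2$-orthogonal, hence each vanishes: $\nabla\phi=0$ (so $\phi=0$ in $W^{1,p}/\mathbb{R}$), $\boldeta=0$, and $\nabla\times\bpsi=0$; then the gauge conditions force $\bpsi=0$ by the Poincaré-type inequality for the curl-curl operator under the Coulomb gauge on the given topology. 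I expect the main obstacle to be the $L^p$ (rather than $L^2$) solvability and regularity of the curl-curl problem for $\bpsi$ on a general bounded Lipschitz polyhedron of nontrivial topology — this is where one must invoke the $L^p$-Hodge/de Rham theory and be careful about which function spaces the traces live in; the Neumann problem for $\phi$ and the orthogonality/uniqueness argument are comparatively routine. Since this theorem is quoted from \cite{divcurl}, I would in fact cite that reference for the hard analytic input and only sketch the decomposition and the orthogonality argument here.
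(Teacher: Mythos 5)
The paper itself gives no proof of this theorem: it is quoted from \cite{divcurl}, so there is no internal argument to compare against, and your closing decision to cite that reference for the hard analytic input is exactly what the authors do. Your construction (gauged curl--curl problem for $\bpsi$, weighted Neumann problem for $\phi$, remainder $\boldeta$, orthogonality for uniqueness) is the standard route and is presumably close to the argument in the cited work.

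Two points in the portion you do argue are genuinely incomplete, however. First, the uniqueness argument by pairing $\varepsilon^{-1}\nabla\times\bpsi+\nabla\phi+\boldeta=0$ against $\varepsilon$ times itself is an $L^2$ argument: for $1<p<2$ the three terms are only in $L^p(\Omega)$, so pairings such as $(\nabla\times\bpsi,\nabla\phi)$ need not be integrable and the claimed mutual $\varepsilon$-weighted $L^2$-orthogonality is not even defined. For general $p>1$ one needs a duality argument instead, e.g. testing the identity against $\nabla v$ with $v\in W^{1,q}(\Omega)$ and against $\nabla\times\bw$ with $\bw\in W_0^{curl,q}(\Omega)$ (using the companion decomposition in $L^q$, $\tfrac1p+\tfrac1q=1$) to conclude successively $\nabla\phi=0$, $\nabla\times\bpsi=0$, $\boldeta=0$, and then the gauge conditions to get $\bpsi=0$. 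Second, the two a priori statements you invoke --- $L^p$ solvability with the bound $\|\bpsi\|_{W^{curl,p}(\Omega)}\lesssim\|\varepsilon^{1/p}\bu\|_{L^p(\Omega)}$ for the gauged curl--curl problem, and $W^{1,p}$ solvability of the $\varepsilon$-weighted Neumann problem for every $p\in(1,\infty)$ with merely $L^\infty$, uniformly elliptic $\varepsilon$ on a polyhedron --- are precisely the nontrivial content of the theorem; for $p$ far from $2$ they do not follow from standard Meyers-type perturbation or Lipschitz-domain Hodge theory without further hypotheses. Saying they follow from $L^p$ Hodge/de Rham theory is where the proof actually lives, so your sketch is only as strong as the citation you fall back on --- which is acceptable here, but should be stated as such rather than as a derivation.
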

\begin{theorem}
The kernel of the matrix of the $L^p$-PDWG method \eqref{EQ:PDWG-3d:01} is given by
$$
K_h = \{ (\bv_h, s_h = 0, \lambda_h = 0, \bq_h = 0):\ \bv_h\in \bV_h\cap \mathbb{W}_0^{\varepsilon n,p}(\Omega)\}.
$$
In other words, the kernel of the matrix of the $L^p$-PDWG scheme \eqref{EQ:PDWG-3d:01} is isomorphic to the subspace of $\mathbb{W}_0^{\varepsilon n,p}(\Omega)$ consisting of harmonic functions that are piecewise polynomial of degree $k$.
\end{theorem}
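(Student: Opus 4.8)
The plan is to characterize the kernel $K_h$ by setting $F \equiv 0$ in the $L^p$-PDWG system \eqref{EQ:PDWG-3d:01} and showing that any solution $(\bu_h, s_h, \lambda_h, \bq_h)$ must have $s_h = 0$, $\lambda_h = 0$, $\bq_h = 0$ and $\bu_h \in \bV_h \cap \mathbb{W}_0^{\varepsilon n,p}(\Omega)$, and conversely that every such $\bu_h$ (with the other three components zero) solves the homogeneous system. The standard device for PDWG schemes applies: first take the test functions in the homogeneous system to be the unknowns themselves in a cross-wise fashion. Specifically, I would set $\varphi = \lambda_h$, $\bpsi = \bq_h$ in the first equation and $\bm{v} = \bu_h$, $r = s_h$ in the second equation; subtracting and using the definition \eqref{EQ:Bh} of $B_h$ together with the skew pairing between $B_h(\bu_h, s_h; \lambda_h, \bq_h)$ terms, the bilinear contributions cancel and one is left with $s_1(\lambda_h, \bq_h; \lambda_h, \bq_h) + s_2(s_h; s_h) = 0$. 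Since both stabilizers are nonnegative (they are $p$-th and $q$-th powers of absolute values times positive constants $\rho_i$), this forces $\3bar (\lambda_h, \bq_h) \3bar = 0$ and $\3bar s_h \3bar = 0$, hence $\lambda_0 = \lambda_b$, $\bq_0 \times \bn = \bq_b \times \bn$, and $s_0 = s_b$ on every $\partial T$.

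Next I would upgrade these "weak-continuity" conclusions to the genuine vanishing of $s_h$, $\lambda_h$, $\bq_h$. With $\lambda_0 = \lambda_b$ on all faces, $\lambda_h$ is a single-valued globally $W^{1,p}$ piecewise polynomial, and the commutative property \eqref{l} lets one identify the discrete weak gradient $\nabla_w \lambda_h$ with the classical piecewise gradient; similarly $\bq_0 \times \bn = \bq_b \times \bn$ makes $\bq_h$ lie in $H(\mathrm{curl})$ conformingly and $\nabla_w \times \bq_h$ agrees with the classical curl via \eqref{l-2}. Now returning to the second equation of \eqref{EQ:PDWG-3d:01} with $s_2(s_h; r) = 0$ already established, we get $B_h(\bm{v}, r; \lambda_h, \bq_h) = 0$ for all $\bm{v} \in \bV_h$, $r \in S_h$; choosing $r$ freely and $\bm{v}$ freely and using the integration-by-parts identities that convert $B_h$ back to the continuous form $B$ on these conforming functions, I would conclude that $(\lambda_h, \bq_h)$ solves the homogeneous dual problem \eqref{EQ:homo-dual-problem} at the discrete level. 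The uniqueness of the solution to \eqref{EQ:homo-dual-problem} proved in \cite{divcurl} — once I argue it transfers to the conforming discrete subspace, or alternatively by a direct argument testing against $\bm{v} = \varepsilon \nabla_w \lambda_h$-type objects via the Helmholtz decomposition Theorem \ref{THM:helmholtz-2} — then gives $\lambda_h$ constant (hence zero in $M_h$ because of the mean-zero constraint $\int_\Omega \varphi_0 = 0$) and $\bq_h = 0$, and feeding $\bq_h = 0$ back into the first equation isolates $(\bpsi_0, \varepsilon \nabla_w s_h) = 0$, from which $\nabla_w s_h = 0$, so $s_h$ is a constant on $\Omega$; the boundary constraint $s_b|_{\Gamma_0} = 0$ then forces $s_h = 0$.

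With $s_h = \lambda_h = 0$, $\bq_h = 0$, the first equation of \eqref{EQ:PDWG-3d:01} collapses to $(\bu_h, \varepsilon \nabla_w \varphi + \nabla_w \times \bpsi) = 0$ for all $\varphi \in M_h$, $\bpsi \in \bW_h$. Taking $\varphi = Q_h w$ for $w \in W^{1,p}(\Omega)$ and $\bpsi = \bbQ_h \bphi$ for $\bphi \in W_0^{\mathrm{curl},p}(\Omega)$, the commutativity \eqref{l}–\eqref{l-2} turns this into $(\bu_h, \varepsilon \mathcalQ_h \nabla w + \mathcalQ_h (\nabla \times \bphi)) = 0$, and since $\bu_h \in \bV_h$ is already a polynomial the projections can be dropped, yielding $(\bu_h, \varepsilon \nabla w + \nabla \times \bphi) = 0$ for all such $w, \bphi$ — i.e., $\bu_h$ satisfies the homogeneous variational form \eqref{EQ:variational-form-homo}. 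By the discussion following \eqref{EQ:variational-form-homo}, this is exactly the statement $\bu_h \in \mathbb{W}_0^{\varepsilon n,p}(\Omega)$ (weak $\curl \bu_h = 0$, $\nabla\cdot(\varepsilon \bu_h) = 0$, $\varepsilon \bu_h \cdot \bn = 0$ on $\Gamma$ — the last including the flux conditions on the $\Gamma_i$ because $\varphi$ ranges over functions that need not vanish on the $\Gamma_i$). The converse inclusion is immediate: if $\bu_h \in \bV_h \cap \mathbb{W}_0^{\varepsilon n,p}(\Omega)$, then $B_h(\bu_h, 0; \varphi, \bpsi) = 0$ and all stabilizer and dual terms vanish, so $(\bu_h, 0, 0, 0) \in K_h$. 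The isomorphism with piecewise-degree-$k$ harmonic fields is then just the observation that $\bV_h \cap \mathbb{W}_0^{\varepsilon n,p}(\Omega)$ consists precisely of those normal $\varepsilon$-harmonic fields that happen to be piecewise polynomials of degree $k$. I expect the main obstacle to be the second step — rigorously transferring the uniqueness of the continuous homogeneous dual problem \eqref{EQ:homo-dual-problem} to the discrete conforming setting, since one must verify that the piecewise-polynomial $\lambda_h, \bq_h$ obtained are genuinely admissible test-and-trial functions for \eqref{EQ:homo-dual-problem} and that no spurious discrete harmonic dual modes appear; this likely requires invoking Theorem \ref{THM:helmholtz-2} to decompose an arbitrary $\bm{v} \in [L^q(\Omega)]^3$ and checking that enough of the decomposition is captured by $\bV_h$ after projection.
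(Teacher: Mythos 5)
Your opening move is sound and is essentially the paper's starting identity: testing the first equation of \eqref{EQ:PDWG-3d:01} with $(\varphi,\bpsi)=(\lambda_h,\bq_h)$ and the second with $(\bv,r)=(\bu_h,s_h)$ cancels the common term $B_h(\bu_h,s_h;\lambda_h,\bq_h)$ and forces $s_1(\lambda_h,\bq_h;\lambda_h,\bq_h)+s_2(s_h;s_h)=0$, hence $\lambda_0=\lambda_b$, $\bq_0\times\bn=\bq_b\times\bn$, $s_0=s_b$ on each $\partial T$; and your elimination of $\lambda_h$ and $\bq_h$ can be completed without invoking the continuous dual uniqueness (the paper does it directly: take $r=0$ and vary $\bv$ to get $\varepsilon\nabla_w\lambda_h+\nabla_w\times\bq_h=0$, pair with $\nabla\lambda_0$, and read $\nabla\cdot(\varepsilon\bq_0)=0$ together with $\langle \bq_0\cdot\bn_i,1\rangle_{\Gamma_i}=0$ off the structure of $S_h$). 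The genuine gap is your claim that, once $\lambda_h=0$ and $\bq_h=0$, the first equation ``isolates'' $(\bpsi_0,\varepsilon\nabla_w s_h)=0$. What the first equation actually gives at that stage is $(\bu_h,\varepsilon\nabla_w\varphi+\nabla_w\times\bpsi)+(\bpsi_0,\varepsilon\nabla_w s_h)=0$ for all $\varphi\in M_h$, $\bpsi\in\bW_h$: the unknown $\bu_h$ is paired against the same test function $\bpsi$ as the $s_h$-term, so nothing decouples and you cannot conclude $\nabla_w s_h=0$ here — yet your final paragraph needs $s_h=0$ before the equation ``collapses'' to $(\bu_h,\varepsilon\nabla_w\varphi+\nabla_w\times\bpsi)=0$. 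The paper resolves exactly this coupling via the Helmholtz decomposition of Theorem \ref{THM:helmholtz-2} applied to the primal unknown: writing it as $\varepsilon^{-1}\nabla\times\bm{\tilde\psi}+\nabla\tilde\phi+\tilde\boldeta$ and testing with $\varphi=Q_h\tilde\phi$, $\bpsi=\bbQ_h\bm{\tilde\psi}$, the commutation identities of Lemma \ref{Lemma5.1} together with $\nabla\cdot(\varepsilon\bm{\tilde\psi})=0$, $\langle\varepsilon\bm{\tilde\psi}\cdot\bn_i,1\rangle_{\Gamma_i}=0$ and the fact that $s_0\in W^{1,p}(\Omega)$ with $s_b|_{\Gamma_0}=0$ and $s_b|_{\Gamma_i}$ constant make the coupling term $(\bm{\tilde\psi},\varepsilon\nabla s_0)$ vanish; this yields $(\varepsilon(\bu_h-\tilde\boldeta),\bu_h-\tilde\boldeta)=0$, i.e.\ $\bu_h$ is a normal $\varepsilon$-harmonic field, and only then does the first pairing vanish for all discrete test functions, giving $\nabla_w s_h=0$, $s_0$ constant, and $s_h=0$ from the boundary constraint. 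The order must be reversed ($\bu_h\in\mathbb{W}_0^{\varepsilon n,p}(\Omega)$ first, $s_h=0$ last); as written, your proof of $s_h=0$ does not go through.

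A second, structural discrepancy: for $p\neq 2$ the stabilizers $s_1,s_2$ are nonlinear in $(\lambda_h,\bq_h)$ and $s_h$, so \eqref{EQ:PDWG-3d:01} is a nonlinear system, and the solution set of the homogeneous problem $F\equiv 0$ — which is what you characterize — does not by itself describe the non-uniqueness of the inhomogeneous problem, which is how this theorem is used in the subsequent existence-and-uniqueness theorem. This is why the paper argues with two solutions of the full system and uses the strict convexity of $t\mapsto |t|^p$ (the chain \eqref{s1}--\eqref{s4}) to force the boundary jumps of the two dual solutions to coincide; your cross-testing trick cannot be applied to the difference of two solutions precisely because $s_1$ and $s_2$ are not linear in their first arguments. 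So your argument is adequate for $p=2$, and as a characterization of the homogeneous solution set in general, but to obtain the statement in the form the paper relies on you must either restrict to $p=2$ or incorporate the two-solution convexity comparison.
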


\begin{proof}  Let $(\bu_h^{(1)}, s_h^{(1)}, \lambda_h^{(1)},\bm{q}_h^{(1)})$ and   $(\bu_h^{(2)}, s_h^{(2)}, \lambda_h^{(2)},\bm{q}_h^{(2)})$ be two different solutions of \eqref{EQ:PDWG-3d:01}. 
This gives, for $i=1,2$, 
\begin{eqnarray}\label{EQ:PDWG-3d:01-1}
 s_1(\lambda_h^{(i)}, \bq_h^{(i)};\varphi,\bpsi) + B_h(\bu_h^{(i)}, s_h^{(i)}; \varphi,\bpsi)&=& F(\varphi,\bpsi),\quad \forall  \varphi\in M_h,\ \bpsi \in \bW_h, \\
-s_2(s_h^{(i)}, r)+B_h(\bm{v}, r; \lambda_h^{(i)}, \bq_h^{(i)}) & =& 0, \quad \forall \bm{v}\in \bV_h,\ r\in S_h. \label{EQ:PDWG-3d:01-2}
\end{eqnarray} 
%and
%\begin{eqnarray}\label{EQ:PDWG-3d:01-3}
%s_1(\lambda_h^{(2)}, \bq_h^{(2)};\varphi,\bpsi) + B_h(\bu_h^{(2)}, s_h^{(2)}; \varphi,\bpsi)&=& F(\varphi,\bpsi),\quad \forall  \varphi\in M_h,\ \bpsi \in \bW_h, \\
%-s_2(s_h^{(2)}, r)+B_h(\bm{v}, r; \lambda_h^{(2)}, \bq_h^{(2)}) & =& 0, \quad \forall \bm{v}\in \bV_h,\ r\in S_h.
% \label{EQ:PDWG-3d:01-4}
%\end{eqnarray} 
Given any $j=1,2$,  taking 
 $(\varphi,\bpsi)=(\lambda_h^{(j)},\bq_h^{(j)})$ in \eqref{EQ:PDWG-3d:01-1} and  using \eqref{EQ:PDWG-3d:01-2}, we easily get 
%  $\varphi=\lambda_h^{(1)}$ and $\bpsi=\bq_h^{(1)}$ in \eqref{EQ:PDWG-3d:01-3}, $\bm{v}=\bu_h^{(2)}$ and $r=s_h^{(2)}$ in \eqref{EQ:PDWG-3d:01-2}, gives
\begin{equation}\label{s1}
\begin{split}
  s_1(\lambda_h^{(i)}, \bq_h^{(i)};\lambda_h^{(j)}, \bq_h^{(j)})+s_2(s_h^{(j)}, s_h^{(i)})
 =  F(\lambda_h^{(j)}, \bq_h^{(j)}),\ \ \forall i, j=1,2.  
%=  s_1(\lambda_h^{(2)}, \bq_h^{(2)};\lambda_h^{(1)}, \bq_h^{(1)}) +s_2(s_h^{(1)}, s_h^{(2)}). 
\end{split}
\end{equation}
%Similarly, letting $\varphi=\lambda_h^{(2)}$ and $\bpsi=\bq_h^{(2)}$ in \eqref{EQ:PDWG-3d:01-1}, $\bm{v}=\bu_h^{(1)}$ and $r=s_h^{(1)}$ in \eqref{EQ:PDWG-3d:01-4}, $\varphi=\lambda_h^{(2)}$ and $\bpsi=\bq_h^{(2)}$ in \eqref{EQ:PDWG-3d:01-3}, $\bm{v}=\bu_h^{(2)}$ and $r=s_h^{(2)}$ in \eqref{EQ:PDWG-3d:01-4}, yields
Consequently, for $j=1,2$, 
\begin{equation}\label{s2}
\begin{split}
   s_1(\lambda_h^{(1)}, \bq_h^{(1)};\lambda_h^{(j)}, \bq_h^{(j)})+s_2(s_h^{(j)}, s_h^{(1)})
= s_1(\lambda_h^{(2)}, \bq_h^{(2)};\lambda_h^{(j)}, \bq_h^{(j)}) +s_2(s_h^{(j)}, s_h^{(2)}). 
\end{split}
\end{equation}
Choosing $j=1$ in \eqref{s2} and using  the Young's inequality $|AB|\leq \frac{|A|^p}{p}+\frac{|B|^q}{q}$ yields that 
\begin{equation*}
    \begin{split}
     &    s_1(\lambda_h^{(1)}, \bq_h^{(1)};\lambda_h^{(1)}, \bq_h^{(1)})+s_2(s_h^{(1)}, s_h^{(1)})\leq \frac{ s_1(\lambda_h^{(2)}, \bq_h^{(2)};\lambda_h^{(2)}, \bq_h^{(2)})}{p}\\&+\frac{ s_1(\lambda_h^{(1)}, \bq_h^{(1)};\lambda_h^{(1)}, \bq_h^{(1)})}{q}+\frac{s_2(s_h^{(1)}, s_h^{(1)})}{q}+\frac{s_2(s_h^{(2)}, s_h^{(2)})}{p},
    \end{split}
\end{equation*}
which leads to
  $$s_1(\lambda_h^{(1)}, \bq_h^{(1)};\lambda_h^{(1)}, \bq_h^{(1)})+s_2(s_h^{(1)}, s_h^{(1)})\leq s_1(\lambda_h^{(2)}, \bq_h^{(2)};\lambda_h^{(2)}, \bq_h^{(2)})+ s_2(s_h^{(2)}, s_h^{(2)}).$$
Similarly, we take  $j=2$ in \eqref{s2} and use  the Young's inequality 
again to derive 
  $$  s_1(\lambda_h^{(2)}, \bq_h^{(2)};\lambda_h^{(2)}, \bq_h^{(2)})+ s_2(s_h^{(2)}, s_h^{(2)})\leq s_1(\lambda_h^{(1)}, \bq_h^{(1)};\lambda_h^{(1)}, \bq_h^{(1)})+s_2(s_h^{(1)}, s_h^{(1)}).$$
  Combining the last two inequality leads to 
  \begin{equation}\label{s3} s_1(\lambda_h^{(1)}, \bq_h^{(1)};\lambda_h^{(1)}, \bq_h^{(1)})+s_2(s_h^{(1)}, s_h^{(1)})= s_1(\lambda_h^{(2)}, \bq_h^{(2)};\lambda_h^{(2)}, \bq_h^{(2)})+ s_2(s_h^{(2)}, s_h^{(2)}). 
 \end{equation}
  Note that for any two real numbers $A$ and $B$, there holds 
    $$
   \Big|\frac{A+B}{2}\Big|^p\leq \frac{|A|^p+|B|^p}{2}, 
    $$ and the equality holds true if and only if $A=B$. This follows that
    \begin{equation}\label{s4}
        \begin{split}
           &s_1(\frac{\lambda_h^{(1)}+\lambda_h^{(2)}}{2}, \frac{\bq_h^{(1)}+\bq_h^{(2)}}{2};\frac{\lambda_h^{(1)}+\lambda_h^{(2)}}{2}, \frac{\bq_h^{(1)}+\bq_h^{(2)}}{2})+s_2(\frac{s_h^{(1)}+s_h^{(2)}}{2}, \frac{s_h^{(1)}+s_h^{(2)}}{2})\\
         &  \leq \frac{1}{2}\Big(s_1(\lambda_h^{(1)}, \bq_h^{(1)};\lambda_h^{(1)}, \bq_h^{(1)})+s_1(\lambda_h^{(2)}, \bq_h^{(2)};\lambda_h^{(2)}, \bq_h^{(2)}\Big)+ \frac{1}{2}\Big(s_2(s_h^{(1)}, s_h^{(1)})  + s_2(s_h^{(2)}, s_h^{(2)})\Big).
    %    &    = s_1(\lambda_h^{(1)}, \bq_h^{(1)};\lambda_h^{(1)}, \bq_h^{(1)})+s_2(s_h^{(1)}, s_h^{(1)}),
        \end{split}
    \end{equation}
 %   where we used \eqref{s3}. 
 On the other hand, a direct calculation from 
     \eqref{s2}-\eqref{s3} yields 
 \begin{equation*}
     \begin{split}
      &s_1(\lambda_h^{(1)}, \bq_h^{(1)};\lambda_h^{(1)}, \bq_h^{(1)})+s_2(s_h^{(1)}, s_h^{(1)})
     \\
     =&\frac{1}{2}\Big( s_1(\lambda_h^{(1)}, \bq_h^{(1)};\lambda_h^{(1)}, \bq_h^{(1)})+s_2(s_h^{(1)}, s_h^{(1)})+ s_1(\lambda_h^{(2)}, \bq_h^{(2)};\lambda_h^{(1)}, \bq_h^{(1)}) +s_2(s_h^{(1)}, s_h^{(2)}) \Big)\\
     =& s_1(\frac{\lambda_h^{(1)}+\lambda_h^{(2)}}{2} , \frac{ \bq_h^{(1)}+ \bq_h^{(2)}}{2};\lambda_h^{(1)}, \bq_h^{(1)})+s_2(s_h^{(1)}, \frac{s_h^{(1)}+s_h^{(2)}}{2}).
%      \leq & \frac{1}{q}  s_1(\frac{\lambda_h^{(1)}+\lambda_h^{(2)}}{2}, \frac{ \bq_h^{(1)}+ \bq_h^{(2)}}{2};\frac{\lambda_h^{(1)}+\lambda_h^{(2)}}{2}, \frac{ \bq_h^{(1)}+ \bq_h^{(2)}}{2})+ \frac{1}{p} s_1(\lambda_h^{(1)}, \bq_h^{(1)};\lambda_h^{(1)}, \bq_h^{(1)})\\
%      &+\frac{1}{q} s_2(s_h^{(1)}, s_h^{(1)}) +\frac{1}{p} s_2(\frac{s_h^{(1)}+s_h^{(2)}}{2}, \frac{s_h^{(1)}+s_h^{(2)}}{2}),
     \end{split}
 \end{equation*}
   Using the  Young's inequality again,  we get 
 \begin{equation*}
     \begin{split}
     &s_1(\lambda_h^{(1)}, \bq_h^{(1)};\lambda_h^{(1)}, \bq_h^{(1)})+s_2(s_h^{(1)}, s_h^{(1)}) \\
     \leq&  s_1(\frac{\lambda_h^{(1)}+\lambda_h^{(2)}}{2}, \frac{ \bq_h^{(1)}+ \bq_h^{(2)}}{2};\frac{\lambda_h^{(1)}+\lambda_h^{(2)}}{2}, \frac{ \bq_h^{(1)}+ \bq_h^{(2)}}{2})+s_2(\frac{s_h^{(1)}+s_h^{(2)}}{2}, \frac{s_h^{(1)}+s_h^{(2)}}{2}).
        \end{split}
 \end{equation*}
Then we conclude from \eqref{s4} and \eqref{s3} 
 \begin{equation*}\label{ss1}
     \begin{split}
     &s_1(\frac{\lambda_h^{(1)}+\lambda_h^{(2)}}{2}, \frac{ \bq_h^{(1)}+ \bq_h^{(2)}}{2};\frac{\lambda_h^{(1)}+\lambda_h^{(2)}}{2}, \frac{ \bq_h^{(1)}+ \bq_h^{(2)}}{2})+s_2(\frac{s_h^{(1)}+s_h^{(2)}}{2}, \frac{s_h^{(1)}+s_h^{(2)}}{2})
     \\=& s_1(\lambda_h^{(1)}, \bq_h^{(1)};\lambda_h^{(1)}, \bq_h^{(1)})+s_2(s_h^{(1)}, s_h^{(1)})
     =  s_1(\lambda_h^{(2)}, \bq_h^{(2)};\lambda_h^{(2)}, \bq_h^{(2)})+ s_2(s_h^{(2)}, s_h^{(2)}).
         \end{split}
 \end{equation*} 
The above equation holds true if and only if
\begin{eqnarray}\label{eq1}
  \lambda_0^{(1)}- \lambda_b^{(1)}&=& \lambda_0^{(2)}- \lambda_b^{(2)}, \qquad \text{on}\ \partial T,
 \\
 \bq_0^{(1)}\times \bn- \bq_b^{(1)}\times \bn&=&\bq_0^{(2)}\times \bn- \bq_b^{(2)}\times \bn, \qquad \text{on}\ \partial T, 
 \\
 s_0^{(1)}-s_b^{(1)}&=&s_0^{(2)}-s_b^{(2)}, \qquad \text{on}\ \partial T,\label{eq3}
  \end{eqnarray}
  
  Denoting $\epsilon_h=\lambda_h^{(1)}-\lambda_h^{(2)}=\{\epsilon_0, \epsilon_b\}$,  $\bE_h =\bq_h^{(1)}-\bq_h^{(2)}=\{\bE_0, \bE_b\}$, $e_h =s_h^{(1)}-s_h^{(2)}=\{e_0, e_b\}$, we have 
 \begin{eqnarray}\label{e1}
 \epsilon_0  =  \epsilon_b, \  \text{on}\ \partial T, \qquad
 \bE_0 \times \bn  =  \bE_b\times \bn, \  \text{on}\ \partial T,  \qquad
  e_0  = e_b, \ \text{on}\ \partial T.  
 \end{eqnarray}
   Since $s_0^{(1)}-s_b^{(1)}=s_0^{(2)}-s_b^{(2)}$ on $\partial T$,  there holds 
 $$
 s_2(s_h^{(1)}, r)=s_2(s_h^{(2)}, r), \qquad \forall r\in S_h,
 $$
 which, combined with \eqref{EQ:PDWG-3d:01-2},  gives
$$
B_h(\bv, r; \lambda_h^{(1)}, \bq_h^{(1)})= B_h(\bv, r; \lambda_h^{(2)}, \bq_h^{(2)}), \qquad \forall \bv\in \bV_h, r\in S_h,
$$
or equivalently, 
 $$
B_h(\bv, r; \epsilon_h, \bE_h)= 0,  \qquad\forall \bv\in \bV_h, r\in S_h,
$$
 i.e., 
\begin{eqnarray}
%  &&s_1(\lambda_h, \bm{q}_h;\lambda_h, \bm{q}_h) = 0, \quad s_2(s_h,s_h)=0, \label{eq:11:08:100}\\
% && (\bm{u}_h, \varepsilon\nabla_w \varphi + \nabla_w \times\bm{\psi}) +(\bm{\psi}_0,\varepsilon\nabla_w s_h) = 0,\qquad\forall \varphi\in M_h, \bm{\psi}\in\bW_h,\label{eq:11:08:101}\\
%&&
(\bE_0, \varepsilon\nabla_w r)+(\bm{v}, \varepsilon \nabla_w \epsilon_h +\nabla_w \times \bE_h) = 0,\qquad \forall \bm{v}\in\bV_h, r\in S_h.\label{eq:11:08:102}
\end{eqnarray}
It follows from \eqref{e1} that
 $\epsilon_0\in C(\Omega)$, $e_0\in C(\Omega)$ and $\bE_0\in H_0(curl;\Omega)$, which indicates
\begin{eqnarray}\label{EQ:Nov-26:01}
\nabla \epsilon_0 = \nabla_w \epsilon_h ,\; \nabla \times \bE_0 = \nabla_w \times \bE_h.
\end{eqnarray}
Letting $r=0$ and varying $\bm{v}$ in \eqref{eq:11:08:102}, we get
$$
\varepsilon \nabla_w \epsilon_h + \nabla_w \times \bE_h = 0,
$$
which, together with \eqref{EQ:Nov-26:01}, gives
\begin{equation}\label{EQ:11:10:100}
\varepsilon \nabla \epsilon_0 + \nabla\times \bE_0 = 0.
\end{equation}
Using $\bE_0\in H_{0}(curl;\Omega)$, we have
\begin{eqnarray*}
(\varepsilon \nabla \epsilon_0 + \nabla\times \bE_0, \nabla \epsilon_0 )
&=&(\varepsilon \nabla \epsilon_0, \nabla \epsilon_0) + (\nabla\times \bE_0, \nabla \epsilon_0)\\
&=&(\varepsilon \nabla \epsilon_0, \nabla \epsilon_0) + \langle \bm{n}\times \bE_0, \epsilon_0\rangle
= (\varepsilon \nabla \epsilon_0, \nabla \epsilon_0),\end{eqnarray*}
which, from \eqref{EQ:11:10:100}, implies
%\begin{eqnarray*}
%(\varepsilon \nabla \epsilon_0, \nabla \epsilon_0 )=0,
%\end{eqnarray*}
 $\nabla \epsilon_0 = \bm{0}$, and hence $\epsilon_0 \equiv 0$ as a function with mean value 0.  This further leads to $\epsilon_b\equiv 0$. Thus, from \eqref{EQ:11:10:100} we have
$$
\nabla \times \bE_0=0, \qquad \text{in}\ \Omega.
$$
Note that $\bE_0$ satisfies
$$
(\bE_0, \varepsilon\nabla_w r)=0,\qquad \forall r\in S_h.
$$
 This leads to $\bE_0 \in H(div_\varepsilon; \Omega)$ and
$$
\nabla\cdot(\varepsilon\bE_0 )=0,\quad \langle \bE_0\cdot\bn_i,1\rangle_{\Gamma_i}=0, i=1,2,\cdots L.
$$
This, together with $\nabla\times\bE_0=0$ and $\bE_0 \in H_0(curl;\Omega)$, indicates that $\bE_0 \equiv 0$, and further $\bE_{b}=\bn\times(\bE_{b}\times\bn)=\bn\times 0=0$.

  Using \eqref{eq1}-\eqref{eq3}, we have
 $$
 s_1(\lambda_h^{(1)}, \bq_h^{(1)}; \varphi, \bpsi)= s_1(\lambda_h^{(2)}, \bq_h^{(2)}; \varphi, \bpsi),
 $$
 which yields, together with \eqref{EQ:PDWG-3d:01-1},  
 $$B_h(\bu_h^{(1)}, s_h^{(1)}; \varphi,\bpsi) =B_h(\bu_h^{(2)}, s_h^{(2)}; \varphi,\bpsi) ,\quad \forall  \varphi\in M_h,\ \bpsi \in \bW_h.$$
Denote $\bE_{\bu_h}=\bu_h^{(1)}-\bu_h^{(2)}$. The above equality is equivalent to
 \begin{equation}\label{ss}
   0=  B_h(\bE_{\bu_h}, e_h; \varphi,\bpsi)=(\bE_{\bu_h}, \varepsilon\nabla_w\varphi+\nabla_w\times\bpsi)+(\bpsi_0, \varepsilon \nabla_w e_h),\quad \forall  \varphi\in M_h,\ \bpsi \in \bW_h.
 \end{equation}  
 Now we have, from the Helmholtz decomposition \eqref{EQ:helmholtz-2}, 
$$
\bE_{\bu_h}=\varepsilon^{-1} \nabla\times\bm{\tilde\psi}+\nabla\tilde\phi+\bm{\tilde\eta},
$$
where $\bm{\tilde\eta}\in \mathbb{W}_0^{\varepsilon n,p}(\Omega)$ and $\bm{\tilde\psi}\in W_0^{curl,p}(\Omega)$ satisfying $\nabla\cdot(\varepsilon\bm{\tilde\psi}) =0$ and $\langle\varepsilon\bm{\tilde\psi}\cdot\bn_i,1\rangle_{\Gamma_i}=0$ for $i=1,\cdots, L$.
It follows from  $e_0=e_b$ on $\pT$ for each element $T\in \T_h$   that $e_0\in W^{1, p}(\Omega)$. This leads to $\nabla_w e_h = \nabla e_0$. If the dimension of $\mathbb{W}_0^{\varepsilon n,p}(\Omega)$ is  0,  we have $\bm{\tilde\eta}=0$.   Letting the test functions $\phi$ and $\bm{\psi}$ in \eqref{ss}  be the $L^2$ projections of the corresponding function in the Helmholtz decomposition gives rise to 
\begin{equation}
\begin{split}
0=&(\bE_{\bu_h}, \varepsilon\nabla_w Q_h\tilde\varphi + \nabla_w \times \bbQ_h\bm{\tilde\psi})+(\bbQ_0 \bm{\tilde\psi},\varepsilon\nabla_w e_h)\\
=&(\bE_{\bu_h}, {\mathcal Q}_h\varepsilon\nabla\tilde\varphi + {\mathcal Q}_h\nabla \times \bm{\tilde\psi}) +(\bm{\tilde\psi},\varepsilon\nabla e_0)\\
=&(\bE_{\bu_h}, \varepsilon\nabla\tilde\varphi + \nabla \times \bm{\tilde\psi}) +(\bm{\tilde\psi},\varepsilon\nabla e_0)\\
=&(\varepsilon\bE_{\bu_h}, \bE_{\bu_h} - \bm{\tilde\eta}) +(\bm{\tilde\psi},\varepsilon\nabla e_0)\\
=&(\varepsilon(\bE_{\bu_h}-\bm{\tilde\eta}), \bE_{\bu_h}- \bm{\tilde\eta}),
\end{split}
\end{equation}
which leads to $\bE_{\bu_h}-\bm{\tilde\eta}=0$,  i.e., $\bE_{\bu_h}$ is a harmonic function. As a harmonic function in the form of piecewise polynomial of degree $k$, the first term on the right-hand side of \eqref{ss} is zero for any test functions $\varphi\in M_h$ and $\bm{\psi}\in\bW_h$, which further implies that $\nabla_w e_h=0$. Using \eqref{e1} gives $\nabla e_0= \nabla_w e_h =0$. Therefore we obtain $e_0\equiv 0$ and further $e_b \equiv 0$.

This completes the proof of the theorem.
\end{proof}

Our main result for the solution existence and uniqueness of the numerical scheme \eqref{EQ:PDWG-3d:01} is stated as follows.

\begin{theorem}
The $L^p$-PDWG finite element scheme \eqref{EQ:PDWG-3d:01} has a unique solution for $s_h$, $\lambda_h$ and $\bq_h$. The solution $\bm{u}_h$ is unique up to a harmonic function $\bm{\eta}_h\in \mathbb{W}_0^{\varepsilon n,p}(\Omega)$ which is a piecewise polynomial of degree $k$.
\end{theorem}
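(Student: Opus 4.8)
The plan is to derive the statement directly from the previous theorem, which already identifies the kernel $K_h$ of the linear system associated with \eqref{EQ:PDWG-3d:01}. Since \eqref{EQ:PDWG-3d:01} is a square linear system, existence of a solution (for every right-hand side of the form $F$) is equivalent to uniqueness up to the kernel, and the kernel has just been computed to be $K_h=\{(\bv_h,0,0,0):\bv_h\in\bV_h\cap\mathbb{W}_0^{\varepsilon n,p}(\Omega)\}$. First I would make this square-system observation explicit: the number of unknowns (degrees of freedom for $(\bu_h,s_h,\lambda_h,\bq_h)\in\bV_h\times S_h\times M_h\times\bW_h$) equals the number of equations (test functions $(\varphi,\bpsi,\bv,r)\in M_h\times\bW_h\times\bV_h\times S_h$), because the pairs $(\bV_h,\bV_h)$, $(S_h,S_h)$, $(M_h,M_h)$, $(\bW_h,\bW_h)$ match in the two blocks of \eqref{EQ:PDWG-3d:01}. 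Hence the system is solvable for a given $F$ if and only if $F$ annihilates the left kernel; and by the structure of $B_h$ and the symmetry of the saddle-point pattern, the left kernel coincides with $K_h$ up to the obvious identification, so one only needs $F$ to vanish on the $s_h=\lambda_h=\bq_h=0$ part of $K_h$, which it does trivially since $F(\varphi,\bpsi)$ only pairs against $M_h\times\bW_h$ components and those are zero in $K_h$.

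Next I would read off the uniqueness claims from the description of $K_h$. If $(\bu_h^{(1)},s_h^{(1)},\lambda_h^{(1)},\bq_h^{(1)})$ and $(\bu_h^{(2)},s_h^{(2)},\lambda_h^{(2)},\bq_h^{(2)})$ are two solutions of \eqref{EQ:PDWG-3d:01}, then their difference lies in $K_h$, so $s_h^{(1)}=s_h^{(2)}$, $\lambda_h^{(1)}=\lambda_h^{(2)}$, $\bq_h^{(1)}=\bq_h^{(2)}$, and $\bu_h^{(1)}-\bu_h^{(2)}\in\bV_h\cap\mathbb{W}_0^{\varepsilon n,p}(\Omega)$, i.e.\ a piecewise-polynomial-of-degree-$k$ normal $\varepsilon$-harmonic vector field. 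This is exactly the asserted conclusion: the auxiliary variables $s_h,\lambda_h,\bq_h$ are uniquely determined, while $\bu_h$ is determined only modulo the finite-dimensional space $\bV_h\cap\mathbb{W}_0^{\varepsilon n,p}(\Omega)$.

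The only genuine obstacle is justifying existence, i.e.\ the compatibility of the actual right-hand side $F$ with the left kernel. I would handle this exactly as in the proof of the previous theorem: the kernel computation there in fact already shows (via the homogeneous system \eqref{EQ:PDWG-3d:01-1}--\eqref{EQ:PDWG-3d:01-2} with zero data) that the matrix has the stated kernel, and since $F$ in \eqref{ff:1} pairs only with $(\varphi,\bpsi)\in M_h\times\bW_h$ whose representatives vanish on $K_h$, the Fredholm alternative for square systems gives solvability. Concretely, I would invoke the fact that for a square linear system $\dim(\ker)=\dim(\mathrm{coker})$, pick any complement of $K_h$, restrict the map there, and note it is injective hence bijective onto the orthogonal complement of the left kernel; since $F$ lies in that complement, a solution exists. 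With existence established and uniqueness of $(s_h,\lambda_h,\bq_h)$ plus the characterization of the $\bu_h$-ambiguity already in hand from $K_h$, the theorem follows.
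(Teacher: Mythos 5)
Your plan---deduce the theorem from the preceding kernel theorem---is the same route the paper implicitly takes (the paper states this result as a direct consequence and offers no separate proof), and your reading of the uniqueness half is essentially right: the proof of the kernel theorem takes two arbitrary solutions of the \emph{inhomogeneous} system \eqref{EQ:PDWG-3d:01} and shows, via Young's inequality and strict convexity of $t\mapsto|t|^p$, that $s_h$, $\lambda_h$, $\bq_h$ coincide and that $\bu_h^{(1)}-\bu_h^{(2)}\in\bV_h\cap\mathbb{W}_0^{\varepsilon n,p}(\Omega)$. So citing that argument gives the uniqueness statement. However, your justification is framed entirely in linear-algebra terms, and that is where the genuine gap lies: for $p\neq 2$ the scheme \eqref{EQ:PDWG-3d:01} is \emph{not} a linear system. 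The stabilizers $s_1$ and $s_2$ contain the factors $|\lambda_0-\lambda_b|^{p-1}\mathrm{sgn}(\lambda_0-\lambda_b)$, $|\bq_0\times\bn-\bq_b\times\bn|^{p-1}\mathrm{sgn}(\cdot)$ and $|s_0-s_b|^{q-1}\mathrm{sgn}(s_0-s_b)$, which are nonlinear in the unknowns (this is precisely why the numerical section solves \eqref{EQ:PDWG-3d:01} by an iterative linearization). Consequently the statements ``the difference of two solutions lies in the kernel,'' ``$\dim\ker=\dim\mathrm{coker}$,'' and the Fredholm alternative for square matrices are not available: the solution set of a nonlinear equation is not an affine translate of the zero set of the homogeneous problem, and a nonlinear map with nontrivial zero set need not be surjective onto any complement.

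The part that this actually breaks is existence. Your argument ``the system is square, $F$ annihilates the left kernel, hence a solution exists'' is only valid for $p=q=2$. For general $p>1$ existence has to come from a different mechanism, e.g.\ recognizing \eqref{EQ:PDWG-3d:01} as the optimality (KKT) system of a finite-dimensional convex minimization problem whose objective is built from $\frac{1}{p}\3bar(\lambda_h,\bq_h)\3bar^p$, $\frac{1}{q}\3bar s_h\3bar^q$ and $F$, with $(\bu_h,s_h)$ playing the role of multipliers, or from a monotone-operator/Brouwer fixed-point argument exploiting that the stabilizer terms are gradients of convex functionals while the $B_h$-coupling is skew in the saddle-point sense; coercivity then has to be checked modulo the kernel $K_h$, on which $F$ indeed vanishes. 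The paper itself leaves this step unstated, but your proposal cannot be repaired by the Fredholm alternative alone; it needs one of these nonlinear existence tools. The dimension count you give (unknowns in $\bV_h\times S_h\times M_h\times\bW_h$ versus test functions in $M_h\times\bW_h\times\bV_h\times S_h$) is correct and useful for the linearized problem, but it does not by itself yield solvability of the nonlinear system.
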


\begin{remark} For the   lowest order  $k=0$ of the $L^p$-PDWG scheme \eqref{EQ:PDWG-3d:01},   any $\bm{\eta}_h$ in the kernel $K_h$ of the matrix of the $L^p$-PDWG method  is a piecewise constant vector field.   $\bm{\eta}_h$ is thus continuous across each interior element interface and has vanishing value on the domain boundary along the normal direction. This leads to $\bm{\eta}_h\equiv 0$. Therefore, the $L^p$-PDWG finite element scheme \eqref{EQ:PDWG-3d:01} has a unique solution for  $\bm{u}_h$ in the case of the lowest order element. 
\end{remark}

% \section{Error Equations}\label{Section:6}

\section{$L^q$-Error Analysis for the Primal Variable}\label{Section:7}
In this section, we shall establish the $L^q$ error estimates for primal variable $\bu_h$ in the $L^p$-PDWG scheme \eqref{EQ:PDWG-3d:01}.
Denote the error functions by
$$
e_{\bu}= \mathcalQ_h\bu - \bu_h, \ e_s=Q_h s - s_h,\ e_\lambda=Q_h\lambda - \lambda_h, \ e_{\bm{q}}=\bbQ_h\bm{q}-\bm{q}_h.
$$
We begin with the study of error equations  for the $L^p$-PDWG scheme \eqref{EQ:PDWG-3d:01} developed for the  div-curl system \eqref{EQ:div-curl:div-eq}-\eqref{EQ:div-curl:normalBC}.
\subsection{Error Equations}
%In this section, we shall derive the error equations  for the $L^p$-PDWG scheme \eqref{EQ:PDWG-3d:01} developed for the  div-curl system \eqref{EQ:div-curl:div-eq}-\eqref{EQ:div-curl:normalBC}.
For the exact solution $\{\bu; s=0\}$ of the div-curl system, recalling the definition of $B_h(\cdot;\cdot)$ in \eqref{EQ:Bh} and using 
 (\ref{EQ:dis_WeakGradient}) and (\ref{EQ:dis_WeakCurl}), we have 
\begin{eqnarray*}
B_h(\mathcalQ_h\bu, Q_h s; \varphi,\bpsi) 
%=& (\mathcalQ_h\bu, \varepsilon \nabla_w \varphi + \nabla_w \times \bpsi) + (\bpsi_0, \varepsilon\nabla_w Q_h s)\\
&=& (\mathcalQ_h\bu, \varepsilon \nabla \varphi_0 + \nabla \times \bpsi_0)  
  + \langle \mathcalQ_h\bu, \varepsilon\bn(\varphi_b-\varphi_0) + (\bpsi_0-\bpsi_b)\times\bn\rangle_{{\mathcal E}_h}\\
&=& (\bu, \varepsilon \nabla \varphi_0 + \nabla \times \bpsi_0)  
  + \langle \mathcalQ_h\bu, \varepsilon\bn(\varphi_b-\varphi_0) + (\bpsi_0-\bpsi_b)\times\bn\rangle_{{\mathcal E}_h}. 
%=& -(\nabla\cdot(\varepsilon\bu),\varphi_0) + (\nabla\times\bu, \bpsi_0)  
%  + \langle \bu, \varepsilon\bn(\varphi_0-\varphi_b) + (\bpsi_b-\bpsi_0)\times\bn\rangle_{{\mathcal E}_h}\\
%& + \langle \mathcalQ_h\bu, \varepsilon\bn(\varphi_b-\varphi_0) + (\bpsi_0-\bpsi_b)\times\bn\rangle_{{\mathcal E}_h}  + \langle {\phi}_1, \varphi_b\rangle_{\partial\Omega}\\
%=& \langle {\phi}_1, {\varphi}_b\rangle_{\partial\Omega} - (f,\varphi_0) +(\bm{g},\bm{\psi}_0) 
% + \langle \bu-\mathcalQ_h\bu, \varepsilon\bn(\varphi_0-\varphi_b) + (\bpsi_b-\bpsi_0)\times\bn\rangle_{{\mathcal E}_h}.
\end{eqnarray*}
By using the integration by parts and \eqref{EQ:div-curl:div-eq}-\eqref{EQ:div-curl:normalBC}, we easily obtain 
\begin{eqnarray*} 
&&B_h(\mathcalQ_h\bu, Q_h s; \varphi,\bpsi) \\
&&= -(\nabla\cdot(\varepsilon\bu),\varphi_0) + (\nabla\times\bu, \bpsi_0)  
  + \langle \bu, \varepsilon\bn(\varphi_0-\varphi_b) + (\bpsi_b-\bpsi_0)\times\bn\rangle_{{\mathcal E}_h}\\
&& + \langle \mathcalQ_h\bu, \varepsilon\bn(\varphi_b-\varphi_0) + (\bpsi_0-\bpsi_b)\times\bn\rangle_{{\mathcal E}_h}  + \langle {\phi}_1, \varphi_b\rangle_{\partial\Omega}\\
&&= \langle {\phi}_1, {\varphi}_b\rangle_{\partial\Omega} - (f,\varphi_0) +(\bm{g},\bm{\psi}_0) 
 + \langle \bu-\mathcalQ_h\bu, \varepsilon\bn(\varphi_0-\varphi_b) + (\bpsi_b-\bpsi_0)\times\bn\rangle_{{\mathcal E}_h}.
\end{eqnarray*}
Noticing that $\lambda=0$ and $\bm{q}=0$,   then 
\begin{equation}\label{EQ:div-curl:EE:November-08:ee:01}
\begin{split}
 s_1(e_{\lambda}, e_{\bm{q}};\varphi, \bm{\psi})
+ B_h(e_{\bu} , e_s; \varphi,\bpsi)  
=   \langle \bu-\mathcalQ_h\bu, \varepsilon\bn(\varphi_0-\varphi_b) + (\bpsi_b-\bpsi_0)\times\bn\rangle_{{\mathcal E}_h}.
\end{split}
\end{equation}
Similarly, we conclude from the fact $s=0, \ \bm{q}=0$, $\lambda=0$ that 
\begin{equation}\label{EQ:div-curl:EE:November-08:ee:02}
-s_2(e_s, r)
+ B_h(\bm{v}, r; e_{\lambda}, e_ {\bm{q}}) = 0.
\end{equation}
 
The above two equations \eqref{EQ:div-curl:EE:November-08:ee:01}-\eqref{EQ:div-curl:EE:November-08:ee:02} are the error equations for the $L^p$-PDWG scheme \eqref{EQ:PDWG-3d:01}, which will be frequently used in our error estimates.

\subsection{Error estimates for the dual variables}

Recall that $\T_h$ is a shape-regular finite element partition of
the domain $\Omega$. For any $T\in\T_h$ and $\nabla w\in L^{q}(T)$ with $q>1$, the following trace inequality holds true:
\begin{equation}\label{trace-inequality}
\|w\|^q_{L^q(\pT)}\leq C
h_T^{-1}\Big(\|w\|_{L^{q}(T)}^q+h_T^{q} \| \nabla w\|_{L^{q}(T)}^q\Big).
\end{equation}
 By using the Cauchy-Schwarz inequality and the trace inequality, we get 
 \begin{eqnarray}\label{new:1}
 \begin{split}
|\langle w,v\rangle_{{\mathcal E}_h} |
 &\leq   (\sum_{T\in \mathcal {T}_h } \| w\|^q_{L^q(\partial T)} )^{\frac{1}{q}}
(\sum_{T\in \mathcal {T}_h } \|v \|^p_{L^p(\partial T)} )^{\frac{1}{p}} \\
 &\leq   C
h^{-\frac 1q}(\|w\|_{L^{q}(T)}+h \| \nabla w\|_{L^{q}(T)})(\sum_{T\in \mathcal {T}_h } \|v \|^p_{L^p(\partial T)} )^{\frac{1}{p}}. 
\end{split}
\end{eqnarray}
  
  Now we are ready to present the error estimates for the dual variables. 

\begin{theorem}\label{THM:ErrorEstimate4Triple} Assume  the solution of the div-curl system (\ref{EQ:div-curl:div-eq})-(\ref{EQ:div-curl:normalBC}) satisfies  $\bu\in [W^{k+\theta, q}(\Omega)]^3$ for $\theta\in (1/2,1]$.
 For the numerical solution $\bu_h,s_h,\lambda_h,\bm{q}_h$ arising from the $L^p$-PDWG scheme \eqref{EQ:PDWG-3d:01}, there holds  
\begin{eqnarray}\label{er1}
\3bar (e_\lambda, e_{\bm{q}})\3bar &\leq&   Ch^{(k+\theta)\frac{q}{  p} }\|\nabla^{k+\theta}\bu\|^{\frac{q}{p}}_{L^q(\Omega)}, \\
 \3bar e_s \3bar  & \leq & Ch^{(k+\theta)  }\|\nabla^{k+\theta}\bu\| _{L^q(\Omega)}. \label{er2} 
\end{eqnarray}
 \end{theorem}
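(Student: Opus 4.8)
The plan is to extract the stabilizer semi-norms of the dual errors from the error equations \eqref{EQ:div-curl:EE:November-08:ee:01}–\eqref{EQ:div-curl:EE:November-08:ee:02} by a careful choice of test functions, exploiting the primal-dual structure that lets the $B_h$-terms cancel. First I would test \eqref{EQ:div-curl:EE:November-08:ee:01} with $(\varphi,\bpsi)=(e_\lambda,e_{\bm q})$ and test \eqref{EQ:div-curl:EE:November-08:ee:02} with $(\bv,r)=(e_{\bu},e_s)$; subtracting the two and using the skew/bilinear structure of $B_h$ removes the cross terms $B_h(e_{\bu},e_s;e_\lambda,e_{\bm q})$, leaving
$$
s_1(e_\lambda,e_{\bm q};e_\lambda,e_{\bm q}) + s_2(e_s;e_s) = \langle \bu-\mathcalQ_h\bu,\ \varepsilon\bn((e_\lambda)_0-(e_\lambda)_b) + ((e_{\bm q})_b-(e_{\bm q})_0)\times\bn\rangle_{\mathcal E_h}.
$$
By definition \eqref{norm}–\eqref{norm2}, the left side equals $\3bar(e_\lambda,e_{\bm q})\3bar^{p} + \3bar e_s\3bar^{q}$.

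Next I would estimate the right-hand side. Using H\"older's inequality on $\mathcal E_h$, the boundary term is bounded by
$$
\Big(\sum_{T}\|\bu-\mathcalQ_h\bu\|^{q}_{L^q(\partial T)}\Big)^{1/q}\Big(\sum_{T}\big(h_T^{1-p}\|(e_\lambda)_0-(e_\lambda)_b\|^{p}_{L^p(\partial T)} + h_T^{1-p}\|((e_{\bm q})_0-(e_{\bm q})_b)\times\bn\|^{p}_{L^p(\partial T)}\big)\Big)^{1/p}
$$
up to the weight $h_T^{(1-p)/p}=h_T^{-1/q}$ that I insert to match the stabilizer scaling; the second factor is exactly $\3bar(e_\lambda,e_{\bm q})\3bar$ (times constants from $\varepsilon\in L^\infty$ and $\rho_i$). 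For the first factor I apply the trace inequality \eqref{trace-inequality} to $w=\bu-\mathcalQ_h\bu$ together with the standard $L^2$-projection approximation estimate in $L^q$: under the regularity $\bu\in[W^{k+\theta,q}(\Omega)]^3$ with $\theta\in(1/2,1]$ (the condition $\theta>1/2$ is what makes the trace of $\bu-\mathcalQ_h\bu$ well-defined and of the right order),
$$
\Big(\sum_{T}\|\bu-\mathcalQ_h\bu\|^{q}_{L^q(\partial T)}\Big)^{1/q}\leq C\, h^{k+\theta-1/q}\,\|\nabla^{k+\theta}\bu\|_{L^q(\Omega)}.
$$
Combining, and including the $h_T^{-1/q}$ that converts $\partial T$-norms into the $s_1$-weighted norm, the whole boundary term is bounded by $C\,h^{k+\theta}\|\nabla^{k+\theta}\bu\|_{L^q(\Omega)}\cdot\3bar(e_\lambda,e_{\bm q})\3bar$.

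Finally I would conclude. We have
$$
\3bar(e_\lambda,e_{\bm q})\3bar^{p} + \3bar e_s\3bar^{q} \leq C\,h^{k+\theta}\|\nabla^{k+\theta}\bu\|_{L^q(\Omega)}\,\3bar(e_\lambda,e_{\bm q})\3bar.
$$
Dropping the (nonnegative) $\3bar e_s\3bar^{q}$ term gives $\3bar(e_\lambda,e_{\bm q})\3bar^{p-1}\leq C\,h^{k+\theta}\|\nabla^{k+\theta}\bu\|_{L^q(\Omega)}$, i.e. $\3bar(e_\lambda,e_{\bm q})\3bar\leq C\,h^{(k+\theta)/(p-1)}\|\nabla^{k+\theta}\bu\|_{L^q(\Omega)}^{1/(p-1)}$; since $1/(p-1)=q/p$ (from $1/p+1/q=1$), this is exactly \eqref{er1}. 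Plugging \eqref{er1} back into the displayed inequality and using $\3bar(e_\lambda,e_{\bm q})\3bar\cdot h^{k+\theta} \leq C h^{(k+\theta)q/p}\cdot h^{k+\theta}\|\cdot\| = C h^{(k+\theta)(1+q/p)}\|\cdot\| = C h^{(k+\theta)q}\|\nabla^{k+\theta}\bu\|^{1+q/p}$, and noting $1+q/p=q$, yields $\3bar e_s\3bar^{q}\leq C h^{(k+\theta)q}\|\nabla^{k+\theta}\bu\|^{q}_{L^q(\Omega)}$, which is \eqref{er2} after taking $q$-th roots. The main obstacle is the bookkeeping of the $h$-powers: making sure the $h_T^{1-p}$ weight in $s_1$, the $h_T^{-1}$ from the trace inequality, and the approximation order $h^{k+\theta}$ combine correctly, and checking that the algebraic identities $1/(p-1)=q/p$ and $1+q/p=q$ deliver precisely the stated exponents; the role of $\theta>1/2$ (guaranteeing a meaningful trace estimate for $\bu-\mathcalQ_h\bu$) must also be used honestly.
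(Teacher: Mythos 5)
Your proposal is correct and follows essentially the same route as the paper: test the first error equation with $(e_\lambda,e_{\bm q})$ and the second with $(e_{\bu},e_s)$ to cancel the common $B_h$ term, bound the resulting boundary term by H\"older, the trace inequality \eqref{trace-inequality} and the approximation property of $\mathcalQ_h$ to get $Ch^{k+\theta}\|\nabla^{k+\theta}\bu\|_{L^q(\Omega)}\3bar(e_\lambda,e_{\bm q})\3bar$, exactly as in \eqref{r1} and \eqref{te3}. The only (immaterial) difference is the final algebra: the paper absorbs the right-hand side via Young's inequality to reach \eqref{EQ:error-estimate-n-part01}, whereas you drop the $\3bar e_s\3bar^q$ term, divide, and then bootstrap for $e_s$ — both yield the identical estimates \eqref{er1}--\eqref{er2}.
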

\begin{proof}First, we have, from the error equations
\eqref{EQ:div-curl:EE:November-08:ee:01}-\eqref{EQ:div-curl:EE:November-08:ee:02} that 
\begin{equation}\label{r1}
\begin{split}
s_1(e_\lambda, e_{\bm{q}}; e_\lambda, e_{\bm{q}}) + s_2(e_s, e_s)
= &\langle \bu-\mathcalQ_h\bu, \varepsilon\bn(e_{\lambda,0}-e_{\lambda,b}) + (e_{\bm{q},b}-e_{\bm{q},0})\times\bn\rangle_{{\mathcal E}_h}.
\end{split}
\end{equation}
  In light of \eqref{new:1} and using the approximation property of $\mathcalQ_h$, we get 
\begin{equation}\label{te3}
\begin{split}
     &|\langle \bu-\mathcalQ_h\bu, \varepsilon\bn(e_{\lambda,0}-e_{\lambda,b}) + (e_{\bm{q},b}-e_{\bm{q},0})\times\bn\rangle_{{\mathcal E}_h}| \\
 \leq & Ch^{k+\theta-\frac{1}{q}}\|\nabla^{k+\theta}\bu\|_{L^q} \Big(\sum_{T\in \mathcal {T}_h } (\|\varepsilon\bn(e_{\lambda,0}-e_{\lambda,b}) \|^p_{L^p(\partial T)}+\|(e_{\bm{q},b}-e_{\bm{q},0})\times\bn \|^p_{L^p(\partial T)} )\Big)^{\frac{1}{p}}\\
 \leq & Ch^{k+\theta}\|\nabla^{k+\theta}\bu\|_{L^q} \3bar ( e_\lambda, e_{\bm{q}}) \3bar.
  \end{split}
\end{equation}
Substituting the above inequality into \eqref{r1} and using the Cauchy-Schwarz inequality yields 
\begin{equation}\label{EQ:error-estimate-n-part01}
\3bar (e_\lambda, e_{\bm{q}})\3bar^p + \3bar e_s \3bar^q \leq C_1h^{(k+\theta)q }\|\nabla^{k+\theta}\bu\|^q_{L^q(\Omega)}.
\end{equation}
  Then \eqref{er1}-\eqref{er2} follow directly. 
%Therefore, we have
%$$
%(1-C_2)\3bar (e_\lambda, e_{\bm{q}})\3bar^p + \3bar e_s \3bar^q \leq C_1h^{(k+\theta)q }\|\nabla^{k+\theta}\bu\|^q_{L^q(\Omega)},$$
% for some  $C_2$ such that $1-C_2>0$. This completes the proof of the theorem.
\end{proof}

\subsection{$L^q$ error estimates for the primal variable ${\bf u}_h$.}

To derive the $L^q$-estimate for the error function $e_\bu$,  we need employ  the Helmholtz decomposition \eqref{EQ:helmholtz-2} for any function $\bv$, such that  
\begin{equation}\label{EQ:2020-02-09:101}
 \bv = \varepsilon^{-1} \nabla\times\bm{\tilde\psi} + \nabla\tilde\phi + \tilde\boldeta,
\end{equation}  
where $\tilde\phi\in W^{1, p}(\Omega)$, $\bm{\tilde\psi}\in {W}_0^{curl,p}(\Omega)$, and $\tilde\boldeta\in \mathbb{W}_0^{\varepsilon n,p}(\Omega)$. 
We assume the  $W^{\alpha,p}$-regularity holds true for some fixed $\alpha\in (1/2, 1]$:
\begin{equation}\label{EQ:regularity-assumption-helmholtz-01new}
\|\bm{\tilde\psi}\|_{\alpha,p} + \|\tilde\phi\|_{\alpha, p} \leq C\|\bv - \tilde\boldeta\|_{0,p}.
\end{equation}
The main convergence result of this paper is stated as follows.

\begin{theorem}\label{THM:ErrorEstimate4uh} Let $\bu$ be a solution of the div-curl system (\ref{EQ:div-curl:div-eq})-(\ref{EQ:div-curl:normalBC}) such that $\bu\in [W^{k+\theta, q}(\Omega)]^3$ for $\theta\in (1/2,1]$. Assume that the Helmholtz decomposition \eqref{EQ:2020-02-09:101} has the $W^{\alpha,p}$-regularity estimate \eqref{EQ:regularity-assumption-helmholtz-01new}. For a numerical solution $\bu_h, \ s_h, \ \lambda_h,\ \bm{q}_h$ arising from $L^p$-PDWG scheme \eqref{EQ:PDWG-3d:01}, there exists a harmonic function $ \tilde\boldeta \in \mathbb{W}_0^{\varepsilon n,p}(\Omega)$ such that 
\begin{equation}\label{EQ:2020-02-09:102}
\begin{split}
\|\varepsilon^{\frac{1}{q}} (\bu_h+\tilde\boldeta- \mathcalQ_h\bu )\|_{L^q(\Omega)} \leq Ch^{k+\theta+\alpha-1}\|\nabla ^{k+\theta}\bu\|_{L^q(\Omega)}.
 \end{split}
\end{equation} 
\end{theorem}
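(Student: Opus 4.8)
The plan is to estimate $\|\varepsilon^{1/q}(\bu_h + \tilde\boldeta - \mathcalQ_h\bu)\|_{L^q(\Omega)}$ by a duality argument, applying the Helmholtz decomposition \eqref{EQ:2020-02-09:101} to the error function $e_{\bu} = \mathcalQ_h\bu - \bu_h$ itself (or rather to $e_{\bu}$ minus its $\varepsilon$-harmonic component). First I would write $e_{\bu} = \varepsilon^{-1}\nabla\times\bm{\tilde\psi} + \nabla\tilde\phi + \tilde\boldeta$ with $\bm{\tilde\psi}\in W_0^{curl,p}(\Omega)$, $\tilde\phi\in W^{1,p}(\Omega)/\mathbb{R}$, $\tilde\boldeta\in\mathbb{W}_0^{\varepsilon n,p}(\Omega)$, and observe that $\varepsilon(e_{\bu}-\tilde\boldeta) = \nabla\times\bm{\tilde\psi} + \varepsilon\nabla\tilde\phi$, so that by the $L^p$--$L^q$ duality pairing
\begin{equation*}
\|\varepsilon^{1/q}(e_{\bu}-\tilde\boldeta)\|_{L^q(\Omega)}^{q} = (\varepsilon(e_{\bu}-\tilde\boldeta), |e_{\bu}-\tilde\boldeta|^{q-2}(e_{\bu}-\tilde\boldeta)) \sim (e_{\bu}, \varepsilon\nabla\tilde\phi + \nabla\times\bm{\tilde\psi}),
\end{equation*}
after using that $(e_{\bu},\varepsilon\tilde\boldeta)=(\varepsilon e_{\bu},\tilde\boldeta)$ can be absorbed since $\tilde\boldeta$ is $\varepsilon$-harmonic and $e_{\bu}-\tilde\boldeta \perp_\varepsilon \mathbb{W}_0^{\varepsilon n,p}(\Omega)$. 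The key point is then to feed $(\varphi,\bpsi) = (Q_h\tilde\phi, \bbQ_h\bm{\tilde\psi})$ into the error equation \eqref{EQ:div-curl:EE:November-08:ee:01}, using the commutativity of Lemma \ref{Lemma5.1}, i.e. $\nabla_w(Q_h\tilde\phi)=\mathcalQ_h(\nabla\tilde\phi)$ and $\nabla_w\times(\bbQ_h\bm{\tilde\psi})=\mathcalQ_h(\nabla\times\bm{\tilde\psi})$, to convert $B_h(e_{\bu},e_s; Q_h\tilde\phi,\bbQ_h\bm{\tilde\psi})$ back into $(e_{\bu}, \varepsilon\nabla\tilde\phi + \nabla\times\bm{\tilde\psi}) + (\bbQ_0\bm{\tilde\psi}, \varepsilon\nabla_w e_s)$.

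Next I would collect the resulting terms on the right-hand side: (i) the stabilizer term $s_1(e_\lambda, e_{\bm q}; Q_h\tilde\phi, \bbQ_h\bm{\tilde\psi})$, (ii) the consistency residual $\langle \bu - \mathcalQ_h\bu, \varepsilon\bn(\varphi_0-\varphi_b)+(\bpsi_b-\bpsi_0)\times\bn\rangle_{\mathcal{E}_h}$ with $\varphi = Q_h\tilde\phi$, $\bpsi = \bbQ_h\bm{\tilde\psi}$, and (iii) the cross term $(\bbQ_0\bm{\tilde\psi}, \varepsilon\nabla_w e_s)$. Term (i) is bounded via Hölder's inequality in the $\ell^p$--$\ell^q$ form of the stabilizer by $\3bar(e_\lambda, e_{\bm q})\3bar \cdot \3bar(Q_h\tilde\phi,\bbQ_h\bm{\tilde\psi})\3bar$; here the second factor should be controlled using the approximation/trace estimates for $Q_h, \bbQ_h$ and the $W^{\alpha,p}$-regularity \eqref{EQ:regularity-assumption-helmholtz-01new}, giving something like $Ch^{\alpha - 1/q}(\|\bm{\tilde\psi}\|_{\alpha,p}+\|\tilde\phi\|_{\alpha,p})$, and then Theorem \ref{THM:ErrorEstimate4Triple} supplies $\3bar(e_\lambda, e_{\bm q})\3bar \leq Ch^{(k+\theta)q/p}\|\nabla^{k+\theta}\bu\|_{L^q}^{q/p}$. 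Term (ii) is handled by \eqref{new:1} together with the approximation property of $\mathcalQ_h$ ($\|\bu - \mathcalQ_h\bu\|$ of order $h^{k+\theta}$) and the trace/approximation bounds on $\varphi_0-\varphi_b$, $\bpsi_0-\bpsi_b$. For term (iii) I would use the error equation \eqref{EQ:div-curl:EE:November-08:ee:02} with a suitable choice of $r\in S_h$ (or directly estimate $\|\nabla_w e_s\|$ via $\3bar e_s\3bar$ from Theorem \ref{THM:ErrorEstimate4Triple}, which is $O(h^{k+\theta})$) to bound it.

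The main obstacle I anticipate is bookkeeping the powers of $h$ so that everything combines to the claimed exponent $h^{k+\theta+\alpha-1}$: the duality identity produces $\|\varepsilon^{1/q}(e_{\bu}-\tilde\boldeta)\|_{L^q}^{q}$ on the left, so after bounding the right side by (roughly) $h^{k+\theta}\cdot h^{\alpha-1/q}\cdot\|\varepsilon^{1/q}(e_{\bu}-\tilde\boldeta)\|_{L^q}^{q-1}$ — the last factor coming from $\|\bm{\tilde\psi}\|_{\alpha,p}+\|\tilde\phi\|_{\alpha,p}\lesssim\|e_{\bu}-\tilde\boldeta\|_{0,p}\lesssim\|\varepsilon^{1/q}(e_{\bu}-\tilde\boldeta)\|_{L^q}^{q/p}$ via \eqref{EQ:regularity-assumption-helmholtz-01new} and the relation $q/p = q-1$ — one divides through by $\|\varepsilon^{1/q}(e_{\bu}-\tilde\boldeta)\|_{L^q}^{q-1}$ and is left with $\|\varepsilon^{1/q}(e_{\bu}-\tilde\boldeta)\|_{L^q}\lesssim h^{k+\theta}\cdot h^{\alpha-1/q}\cdot h^{1/q - ?}$; the stray $h^{-1/q}$ from \eqref{new:1} must be absorbed by the trace estimate on $\varphi_0 - \varphi_b$, which carries an extra $h^{1/q}$ after Hölder, and similarly in term (i) the factor $h^{-1/q}$ from the stabilizer scaling $h_T^{1-p}$ under Hölder must cancel against the $h^{1/q}$ in the trace bound for $\varphi_0-\varphi_b$, $\bpsi_0\times\bn - \bpsi_b\times\bn$. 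Tracking these cancellations — together with verifying that the $\varepsilon$-harmonic component $\tilde\boldeta$ extracted here can be taken in $\mathbb{W}_0^{\varepsilon n,p}(\Omega)$ consistently with the statement, i.e. that $\bu_h + \tilde\boldeta - \mathcalQ_h\bu = -(e_{\bu}-\tilde\boldeta)$ — is where the care is needed; the rest is a routine assembly of Hölder, trace, and approximation inequalities.
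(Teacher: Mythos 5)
Your overall assembly (testing the error equation \eqref{EQ:div-curl:EE:November-08:ee:01} with $(\varphi,\bpsi)=(Q_h\tilde\phi,\bbQ_h\bm{\tilde\psi})$, invoking Lemma \ref{Lemma5.1}, and splitting into stabilizer, consistency and cross terms) is the paper's structure, but the duality step at the start contains a genuine gap. You apply the Helmholtz decomposition to $e_\bu$ itself and then assert $\|\varepsilon^{1/q}(e_\bu-\tilde\boldeta)\|_{L^q}^q=(\varepsilon(e_\bu-\tilde\boldeta),|e_\bu-\tilde\boldeta|^{q-2}(e_\bu-\tilde\boldeta))\sim(e_\bu,\varepsilon\nabla\tilde\phi+\nabla\times\bm{\tilde\psi})$. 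The right-hand side equals $(\varepsilon(e_\bu-\tilde\boldeta),e_\bu-\tilde\boldeta)$, i.e.\ a weighted $L^2$ quantity, so the claimed equivalence fails for every $q\neq 2$; the dual function $|e_\bu-\tilde\boldeta|^{q-2}(e_\bu-\tilde\boldeta)$ is not the function you decomposed. The patch you propose at the end, $\|\bm{\tilde\psi}\|_{\alpha,p}+\|\tilde\phi\|_{\alpha,p}\lesssim\|e_\bu-\tilde\boldeta\|_{0,p}\lesssim\|\varepsilon^{1/q}(e_\bu-\tilde\boldeta)\|_{L^q}^{q/p}$, is also not a valid inequality: it is not homogeneous in $e_\bu-\tilde\boldeta$ (replace it by $t(e_\bu-\tilde\boldeta)$) and false in general. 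The paper avoids both problems by decomposing an \emph{arbitrary} $\bv\in[L^p(\Omega)]^3$ (the dual/test function), for which the regularity hypothesis \eqref{EQ:regularity-assumption-helmholtz-01new} reads $\|\bm{\tilde\psi}\|_{\alpha,p}+\|\tilde\phi\|_{\alpha,p}\leq C\|\bv-\tilde\boldeta\|_{0,p}$; one then obtains $|(\varepsilon(e_\bu-\tilde\boldeta),\bv-\tilde\boldeta)|\leq Ch^{k+\theta+\alpha-1}\|\nabla^{k+\theta}\bu\|_{L^q(\Omega)}\|\bv-\tilde\boldeta\|_{0,p}$ for all such $\bv$ and concludes by $L^p$--$L^q$ duality, which is exactly where the factor $\|\bv-\tilde\boldeta\|_{0,p}$ (your ``$(q-1)$-power'' factor) legitimately comes from.

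A secondary but real issue is the cross term $(\bbQ_0\bm{\tilde\psi},\varepsilon\nabla_w e_s)$. It cannot be bounded by $\3bar e_s\3bar$ directly, since the stabilizer semi-norm only controls the face jumps $e_{s,0}-e_{s,b}$ and not $\nabla_w e_s$ (take $e_{s,0}=e_{s,b}$ smooth and nonconstant: $\3bar e_s\3bar=0$ but $\nabla_w e_s\neq 0$), and the second error equation \eqref{EQ:div-curl:EE:November-08:ee:02} does not involve $e_s$ through $B_h$, so it does not help here. The needed idea is the integration-by-parts identity using the gauge condition $\nabla\cdot(\varepsilon\bm{\tilde\psi})=0$ from \eqref{EQ:helmholtz-2.2}, which reduces the term to $\langle\varepsilon(\bm{\tilde\psi}-\bbQ_0\bm{\tilde\psi})\cdot\bn,\,e_{s,0}-e_{s,b}\rangle_{\E_h}$ and then yields the bound $Ch^{\alpha}\|\nabla^{\alpha}\bm{\tilde\psi}\|_{L^p(\Omega)}\3bar e_s\3bar$, after which \eqref{er2} applies. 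With these two repairs your plan coincides with the paper's proof.
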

% \|\varepsilon^{1/2}(\bu_h+{\color{blue}\tilde\boldeta}-\mathcalQ_h\bu)\| \lesssim  h^{k+\theta+\alpha-1} \|\bu\|_{k+\theta},

\begin{proof}
% From the first error equation \eqref{EQ:div-curl:EE:November-08:ee:01}, we have
% \begin{equation}\label{EQ:div-curl:EE:November-08:ee:01-new}
% \begin{split}
% s_1(e_\lambda, e_{\bm{q}};\varphi, \bm{\psi})
% + B(e_\bu, e_s; \varphi,\bpsi)
% = &\langle \bu-\mathcalQ_h\bu, \varepsilon\bn(\varphi_0-\varphi_b) + (\bpsi_b-\bpsi_0)\times\bn\rangle_{{\mathcal E}_h}.
% \end{split}
% \end{equation}
Given any function $ \bv$,  let $\tilde\phi\in W^{1, p}(\Omega)$, $\bm{\tilde\psi}\in {W}_0^{curl,p}(\Omega)$, and $\tilde\boldeta\in \mathbb{W}_0^{\varepsilon n,p}(\Omega)$
satisfy \eqref{EQ:2020-02-09:101}. 
Taking $\varphi=Q_h\tilde\phi$ and $\bm{\psi}=Q_h\bm{\tilde\psi}$ in $B_h(e_\bu, e_s; \varphi,\bm{\psi})$ and using Lemma \ref{Lemma5.1} gives 
\begin{equation*}\label{EQ:11:08:300}
\begin{split}
B_h(e_\bu, e_s; \varphi,\bm{\psi})=&(e_\bu, \varepsilon\nabla_w Q_h\tilde\phi + \nabla_w\times \bbQ_h\bm{\tilde\psi})+(\bbQ_0\bm{\tilde\psi},\varepsilon\nabla_w e_s)\\
=&(e_\bu, \varepsilon\mathcalQ_h\nabla_w \tilde\phi + \mathcalQ_h\nabla_w\times \bm{\tilde\psi})+(\bbQ_0\bm{\tilde\psi},\varepsilon\nabla_w e_s)\\
= & (e_\bu, \varepsilon\nabla \tilde\phi + \nabla\times \bm{\tilde\psi})+(\bbQ_0\bm{\tilde\psi},\varepsilon\nabla_w e_s).
%= & (\varepsilon e_\bu,  \bv - \tilde\boldeta ) + (\varepsilon \bbQ_0\bm{\tilde\psi},\nabla_w e_s)\\
%= & (\varepsilon (e_\bu - \tilde\boldeta), \bv - \tilde\boldeta ) + (\varepsilon \bbQ_0\bm{\tilde\psi},\nabla_w e_s),
\end{split}
\end{equation*}
 By using the Helmholtz decomposition \eqref{EQ:2020-02-09:101}, we have
 \begin{eqnarray}
 \begin{split}
   B_h(e_\bu, e_s; \varphi,\bm{\psi})&=  (\varepsilon e_\bu,  \bv - \tilde\boldeta ) + (\varepsilon \bbQ_0\bm{\tilde\psi},\nabla_w e_s)\\
&=  (\varepsilon (e_\bu - \tilde\boldeta), \bv - \tilde\boldeta ) + (\varepsilon \bbQ_0\bm{\tilde\psi},\nabla_w e_s).
\end{split}
 \end{eqnarray}
 
  From the definition of the weak gradient, we have
\begin{equation*}
\begin{split}
(\varepsilon \bbQ_0\bm{\tilde\psi},\nabla_w e_s) = &(\varepsilon \bbQ_0\bm{\tilde\psi},\nabla e_{s,0}) + \langle \varepsilon \bbQ_0\bm{\tilde\psi}\cdot\bn, e_{s,b}-e_{s,0}\rangle_{{\mathcal E}_h} \\
= & (\varepsilon\bm{\tilde\psi},\nabla e_{s,0}) + \langle\varepsilon \bbQ_0\bm{\tilde\psi}\cdot\bn, e_{s,b}-e_{s,0}\rangle_{{\mathcal E}_h} \\
= &  -(\nabla\cdot(\varepsilon\bm{\tilde\psi}),  e_{s,0}) +\langle \varepsilon\bm{\tilde\psi}\cdot\bn, e_{s,0} \rangle_{{\mathcal E}_h} + \langle \varepsilon \bbQ_0\bm{\tilde\psi}\cdot\bn, e_{s,b}-e_{s,0}\rangle_{{\mathcal E}_h}\\
= & \langle \varepsilon\bm{\tilde\psi}\cdot\bn, e_{s,0} - e_{s,b}\rangle_{{\mathcal E}_h} + \langle\varepsilon \bbQ_0\bm{\tilde\psi}\cdot\bn, e_{s,b}-e_{s,0}\rangle_{{\mathcal E}_h}\\
= & \langle \varepsilon(\bm{\tilde\psi} - \bbQ_0\bm{\tilde\psi})\cdot\bn, e_{s,0} - e_{s,b}\rangle_{{\mathcal E}_h}.
\end{split}
\end{equation*}
Substituting the above into \eqref{EQ:11:08:300}   yields
\begin{equation}\label{EQ:11:05:300}
\begin{split}
(\varepsilon (e_\bu - \tilde\boldeta), \bv - \tilde\boldeta ) = & B(e_\bu, e_s; \varphi,\bm{\psi}) - \langle \varepsilon(\bm{\tilde\psi} - \bbQ_0\bm{\tilde\psi})\cdot\bn, e_{s,0} - e_{s,b}\rangle_{{\mathcal E}_h}\\
= & \langle \bu-\mathcalQ_h\bu, \varepsilon\bn(\varphi_0-\varphi_b) + (\bpsi_b-\bpsi_0)\times\bn\rangle_{{\mathcal E}_h} - s_1(e_\lambda, e_{\bm{q}};\varphi, \bm{\psi}) \\
& - \langle\varepsilon (\bm{\tilde\psi} - \bbQ_0\bm{\tilde\psi})\cdot\bn, e_{s,0} - e_{s,b}\rangle_{{\mathcal E}_h}
\\=&I_1+I_2+I_3,
\end{split}
\end{equation}
where we used the first error equation \eqref{EQ:div-curl:EE:November-08:ee:01}, $I_i (i=1,\cdots, 3)$ are defined accordingly.

As to $I_1$, using the same argument as what we did for \eqref{te3}, we  get 
\begin{equation}\label{i1}
     |I_1|    \leq Ch^{k+\theta}\|\nabla ^{k+\theta} \bu\|_{L^q(\Omega)}\3bar (\varphi, \bpsi)\3bar.
\end{equation}
As to $I_2$,  recalling the definition of $s_1$ and using the Cauchy-Schwarz inequality,  we have
\begin{equation}\label{i2}
\begin{split}
    | I_2| \leq & C \Big(\sum_{T\in \T_h} h_T^{1-p} \int_{\partial T} |e_{\lambda, 0} -e_{\lambda,b}|^{q(p-1)}ds \Big)^{\frac{1}{q}} \Big(\sum_{T\in \T_h}  \int_{\partial T}h_T^{1-p}  |\varphi_0 - \varphi_b|^{p}ds \Big)^{\frac{1}{p}}\\
                                  &+C  \Big(\sum_{T\in \T_h} h_T^{1-p}  \int_{\partial T} |e_{\bm{q},0} \times \bm{n} -e_{\bm{q},b} \times \bm{n}|^{q(p-1)}ds\Big)^{\frac{1}{q}}\\\cdot &\Big(\sum_{T\in \T_h}  \int_{\partial T}h_T^{1-p} |\bm{\psi}_0 \times \bm{n} -\bm{\psi}_{b} \times \bm{n}|^p ds\Big)^{\frac{1}{p}}\\
%                                    \leq & \rho_1   \Big(\sum_{T\in \T_h}\int_{\partial T}  h_T^{1-p} |e_{\lambda, 0} -e_{\lambda,b}|^{p}ds \Big)^{\frac{1}{q}} \Big(\sum_{T\in \T_h}  \int_{\partial T} h_T^{1-p}(\varphi_0 - \varphi_b)^{p}ds \Big)^{\frac{1}{p}}\\
%                                  &+\rho_2    \Big(\sum_{T\in \T_h}  \int_{\partial T}h_T^{1-p} |e_{\bm{q},0} \times \bm{n} -e_{\bm{q},b} \times \bm{n}|^{p}ds\Big)^{\frac{1}{q}} \\&\cdot\Big(\sum_{T\in \T_h}  \int_{\partial T}h_T^{1-p}(\bm{\psi}_0 \times \bm{n} -\bm{\psi}_{b} \times \bm{n})^p ds\Big)^{\frac{1}{p}}\\
                                %   \leq & C_1 \rho_1 \sum_{T\in \T_h}\int_{\partial T} h_T^{1-p} |e_{\lambda, 0} -e_{\lambda,b}|^{p}ds+C_2\rho_1 \sum_{T\in \T_h} \int_{\partial T} h_T^{1-p}(\varphi_0 - \varphi_b)^{p}ds\\
                                %   &+ C_3 \rho_2\sum_{T\in \T_h} \int_{\partial T}h_T^{1-p} |e_{\bm{q},0} \times \bm{n} -e_{\bm{q},b} \times \bm{n}|^{p}ds+C_4\rho_2\sum_{T\in \T_h}h_T^{1-p} \int_{\partial T}h_T^{1-p}(\bm{\psi}_0 \times \bm{n} -\bm{\psi}_{b} \times \bm{n})^p ds\\
                                  \leq &C\3bar (e_\lambda, e_{\bm{q}})\3bar^{\frac{p}{q}} \3bar(\varphi, \bm{\psi})\3bar.
\end{split} 
\end{equation}
Similarly,    we use  \eqref{new:1} and the approximation property of $\bbQ_0$ to get that  
\begin{equation}\label{i3}
    \begin{split}
      | I_3| %& \langle\varepsilon (\bm{\tilde\psi} - \bbQ_0\bm{\tilde\psi})\cdot\bn, e_{s,0} - e_{s,b}\rangle_{{\mathcal E}_h}\\
     % \leq & \big(\sum_{T\in \mathcal T_h} \| \varepsilon (\bm{\tilde\psi} - \bbQ_0\bm{\tilde\psi})\cdot\bn\|^p_{L^p(\partial T)}\big)^{\frac{1}{p}}  \big(\sum_{T\in \mathcal T_h}\|e_{s,0} - e_{s,b}\|^q_{L^q(\partial T)}\big)^{\frac{1}{q}}  \\
        %  \leq  & \big(\sum_{T\in \mathcal T_h}h_T^{-1}(\|\varepsilon (\bm{\tilde\psi} - \bbQ_0\bm{\tilde\psi})\cdot\bn\|^p_{L^p(T)}+h_T^p\|\varepsilon (\bm{\tilde\psi} - \bbQ_0\bm{\tilde\psi})\cdot\bn\|^p_{L^p(T)})\big)^{\frac{1}{p}}   \\ 
      % &    h^{\frac{q-1}{q}}\big(\sum_{T\in \mathcal T_h}h_T^{1-q}\|e_{s,0} - e_{s,b}\|^q_{L^q(\partial T)}\big)^{\frac{1}{q}} \\
     \leq& Ch^{\alpha-\frac{1}{p}}\|\nabla ^{\alpha} \bm{\tilde\psi}\|_{L^p(\Omega)}h^{\frac{q-1}{q}} \3bar e_s \3bar
= Ch^{\alpha}\|\nabla ^{\alpha} \bm{\tilde\psi}\|_{L^p(\Omega)} \3bar e_s \3bar.
     \end{split} 
\end{equation}
It is easy to check
\begin{equation}\label{i4}
\3bar (\varphi, \bpsi)\3bar \leq Ch^{\alpha-1}( \|\tilde\phi\|_{\alpha, p}+\| \tilde\bpsi\|_{\alpha, p}).
\end{equation}
Substituting \eqref{i1}-\eqref{i3} into \eqref{EQ:11:05:300}, and using \eqref{er1}, \eqref{er2}, \eqref{EQ:regularity-assumption-helmholtz-01new} and \eqref{i4}, this gives
\begin{equation*}%\label{EQ:11:05:300}
\begin{split}
|(\varepsilon (e_\bu - \tilde\boldeta), \bv - \tilde\boldeta )|
\leq
& Ch^{k+\theta}\|\nabla ^{k+\theta} \bu\|_{L^q(\Omega)}\3bar (\varphi, \bpsi)\3bar+Ch^{\alpha}\|\nabla ^{\alpha} \bm{\tilde\psi}\|_{L^p(\Omega)} \3bar e_s \3bar\\ 
%\leq & Ch^{k+\theta}\|\nabla ^{k+\theta}\bu\|_{L^q(\Omega)} h^{\alpha-1}( \|\tilde\phi\|_{\alpha, p}+\| \tilde\bpsi\|_{\alpha, p})+Ch^{\alpha}\|\nabla ^{\alpha} \bm{\tilde\psi}\|_{L^p(\Omega)} Ch^{k+\theta}\|\nabla ^{k+\theta}\bu\|_{L^q(\Omega)}\\
\leq & Ch^{k+\theta+\alpha-1}\|\nabla ^{k+\theta}\bu\|_{L^q(\Omega)} \|\bv - \tilde\boldeta\|_{0,p}.
\end{split}
\end{equation*}
It follows that
\begin{equation*}%\label{EQ:11:05:300}
\begin{split}
\|\varepsilon^{\frac{1}{q}} (e_\bu-\tilde\boldeta)\|_{L^q(\Omega)} \leq Ch^{k+\theta+\alpha-1}\|\nabla ^{k+\theta}\bu\|_{L^q(\Omega)},
\end{split}
\end{equation*}
which gives rise to the error estimate \eqref{EQ:2020-02-09:102}.
This completes the proof of the theorem.
\end{proof}

\section{Numerical Experiments} \label{Section:8}
 In this section, we present some numerical examples to test the performance and accuracy of the PDWG method proposed in \eqref{EQ:PDWG-3d:01}. 
 In our numerical experiments,  the
 computation domain is first partitioned into cubes, and then each cube is divided into $6$ tetrahedra of equi-volume.   
We choose the discontinuous piecewise constant vector fields to approximate the exact solution ${\bf u}$.  That is,  the finite element spaces are given as follows: 
  \begin{equation*}
\begin{split}
\bV_h=&\{\bm{v}:\ \bm{v}|_T\in [P_0(T)]^3, \forall T\in\T_h \},\\
S_h=&\{ s: \ s|_{T}=\{s_0,s_b\}\in \{P_0(T), \Pi_{i=1}^4P_0(F_i)\}, \forall T\in\T_h, F_i\in\partial T \},\\
M_h=&\{ \varphi: \varphi|_{T}=\{\varphi_0,\varphi_b\}\in \{P_0(T), \Pi_{i=1}^4P_0(F_i)\}, \forall T\in\T_h, F_i\in\partial T \},\\
\bW_h=&\{\bm{\psi}: \bm{\psi}|_T=\{\bm{\psi}_0,\bm{\psi}_{b} \}\in\{  [P_0(T)]^3, T_0(\pT)\}, \forall T\in\T_h\},
\end{split}
\end{equation*}
where $T_0(\pT)$ is the the tangent space of $\pT$ given by 
\[
   T_0(\pT)=\{\bm{\psi}: \bm{\psi_{i,j}}\in [P_0(F_i)]^3\times {\bf n}_{F_i},\ \ F_i\in\partial T, i=1,2,3,4, j=1,2\}. 
\]
Here ${\bf n}_{F_i}$ denotes the outer unit normal vector to face $F_i$. 

To solve the system of nonlinear equation \eqref{EQ:PDWG-3d:01},  we adopt 
 an iterative scheme similar to that for the $L^1$ minimization problem in \cite{Vogel}. Specifically, given an approximation 
 $(\bu^m_h, \lambda^m_h,s_h^m, \bq_h^m)\in \bV_h\times M_h\times S_h\times  \bW_h$  at step $m$, the scheme shall compute a new approximate solution  $(\bu^{m+1}_h, \lambda^{m+1}_h,s_h^{m+1}, \bq_h^{m+1})\in \bV_h\times M_h\times S_h\times  \bW_h$ such that
\begin{equation}\label{iter:1}
\left\{
\begin{array}{rl}
s_1(\lambda_h, \bq_h;\varphi,\bpsi) + B_h(\bu_h, s_h; \varphi,\bpsi)&= F(\varphi,\bpsi),\quad \forall  \varphi\in M_h,\ \bpsi \in \bW_h, \\
-s_2(s_h, r)+B_h(\bm{v}, r; \lambda_h, \bq_h) & = 0, \qquad\qquad \forall \bm{v}\in \bV_h,\ r\in S_h.
\end{array}
\right.
\end{equation}
 where
\begin{equation*}\label{stab3}
\begin{split}
&s_1(\lambda_h, \bq_h;\varphi, \bm{\psi}) = \rho_1 \sum_{T\in \T_h} h_T^{1-p}  \int_{\partial T} (|\lambda^m_0-\lambda^m_b|+\epsilon_0)^{p-2}(\lambda^{m+1}_0-\lambda^{m+1}_b)(\varphi_0 - \varphi_b)ds  \\
                                  &+ \rho_2 \sum_{T\in \T_h}  h_T^{1-p}  \int_{\partial T}(|\bm{q}^m_0 \times \bm{n} -\bm{q}^m_{b} \times \bm{n}|+\epsilon_0)^{p-2}(\bm{q}^{m+1}_0 \times \bm{n} -\bm{q}^{m+1}_{b} \times \bm{n})(\bm{\psi}_0 \times \bm{n} -\bm{\psi}_{b} \times \bm{n})ds, \\
    &s_2(s_h; r)=\rho_3 \sum_{T\in \T_h}h_T^{1-q}\int_{\partial T} (|s^m_0 -s^m_b|+\epsilon_0)^{q-2}(s^{m+1}_0 -s^{m+1}_b)( r_0 - r_b)ds.                              
\end{split}
\end{equation*}
Here $\epsilon_0$ is a small, but positive constant, and $ \rho_i, i\le 3$ are some positive stabilization parameters. 

We would like to point out that although $ \rho_i, i\le 3$  in the algorithm  \eqref{EQ:PDWG-3d:01} could be arbitrary,
 the iterative scheme \eqref{iter:1} is not convergent for any positive  $ \rho_i, i\le 3$.  Our numerical experiments indicate that 
 $ \rho_i, i=1,2$ should be taken large enough to ensure the convergence of the iterative scheme. 

 In our experiments,  we test  various problems in which the exact solution ${\bf u}$ has different 
regularities and the computational domain includes convex, non-convex polyhedral regions and cavities.  We shall evaluate the errors for both ${\bf u}_h$  and the 
auxiliary variables $\lambda_h, s_h, {\bf q}_h$, including the $L^q$ error for ${\bf e}_h:={\bf u}-{\bf u}_h$ and  $ {\bf \eta}_h:=\mathcalQ_h\bu-\bu_h$, and 
the  errors $\3bar (e_\lambda, e_{\bm{q}})\3bar $ and $\3bar s_h \3bar$ defined in \eqref{norm} and \eqref{norm2}. We  test different values of $p>1$ with $p=2,3,4,5$.
 The right-hand side function, the boundary condition are calculated from the exact solution.  The coefficient $\epsilon_0$  in \eqref{iter:1} is taken as 
 $\epsilon=10^{-6/(p-1)}$,  $\rho_3=1$, 
 and $\rho_i, i=1,2$ are carefully taken according to different problems.  We stop our 
 iterative procedure when the maximum error
between the $m$-th step and the $(m+1)$-th step  reaches the accuracy $10^{-5}$.

\begin{example}
In this test, we consider the model problem \eqref{EQ:div-curl} in the domain
 $\Omega=(0,1)^3$ with $\varepsilon ={\rm diag}(3,2,1)$. The right-hand side function and the boundary condition are chosen such that 
 the exact solution  to this problem is 
 \[
{\bf u}(x,y,z)=\left( \begin{array}{cc}
   \sin(\pi x)\cos(\pi y)  \\
    -\sin(\pi y)\cos(\pi x) \\
    0 \\
 \end{array}
\right )+ \left( \begin{array}{cc}
   x \\
    y \\
  z \\
 \end{array}
\right ).
\]
  It is easy to see that ${\bf u} \in [H^1(\Omega)]^3$. 
  \end{example}
   The problem is solved by \eqref{iter:1} with the coefficients $\rho_1=\rho_2=1$ for $p=2$ and 
   $\rho_1=\rho_2=9\times10^{p-1}$ for $p\ge 2$. 
   Table \ref{1} illustrates the approximation error and the rate of convergence 
  for the primal variable ${\bf u}_h$ and the auxiliary variables  $\lambda_h, s_h, {\bf q}_h$ with $p=2,\ldots, 5$. 
  We observe a convergence rate of ${\mathcal O}(h)$ for the error $\|\varepsilon^{\frac{1}{q}}  {\bf e}_h\|_{L^q}$. For the dual variables $\lambda_h,{\bf q}_h$, 
  the table suggests a $p$-dependence rate of the convergence, i.e., ${\mathcal O}(h)$ for $p=2$,  ${\mathcal O}(h^{0.6})$ for $p=3$, ${\mathcal O}(h^{0.55})$ for $p=4$ and 
  ${\mathcal O}(h^{0.4})$ for $p=5$.  % These results 
  %reveals that the convergence rate of $\3bar (e_\lambda, e_{\bm{q}})\3bar $ is 
  %dependent upon the values of $p$,  just as indicated by Theorem  \ref{THM:ErrorEstimate4Triple}. 
 Note that the  convergence rate of $\3bar (e_\lambda, e_{\bm{q}})\3bar $ 
 is slightly higher than the
theoretical result $ {\mathcal  O}(h^{\frac qp})$ in \eqref{er1}.  As for the dual variable $s_h$,  we observe a better convergence rate than 
  the theoretical finding ${\mathcal O}(h)$ given in \eqref{er2}, which indicates a superconvergence result.

\begin{table}[h] %[htbp]
\caption{Numerical error and rate of convergence for the $L^p$-PDWG method  for Example 1.}\label{1} \centering
\begin{threeparttable}
        \begin{tabular}{c |c c c c c c c  }
        \hline
   p & $1/h$&   $\|\varepsilon^{\frac{1}{q}}  {\bf e}_h\|_{L^q}$ & rate& $\3bar (e_\lambda, e_{\bm{q}})\3bar $ & rate& $\3bar s_h \3bar$ & rate   \\
       \hline \cline{2-8}

&2 & 1.52e-01 & --& 3.27e-02 & -- & 1.52e-03 & -- \\
2&4 & 7.67e-02 & 0.99 & 1.82e-02 & 0.84 & 3.05e-04 & 2.32 \\
&8 & 3.82e-02 & 1.00 & 9.37e-03 & 0.96 & 5.10e-05 & 2.58\\
&16 & 1.91e-02 & 1.00 & 4.72e-03 & 0.99 & 9.29e-06 & 2.46 \\
 \hline \cline{2-8}

&2 & 1.73e-01 & -- & 2.84e-01 & -- & 2.47e-03 & -- \\
3&4 & 8.65e-02 & 1.00 & 1.99e-01 & 0.52 & 3.20e-04 & 2.95 \\
&8 & 4.27e-02 & 1.02 & 1.30e-01 & 0.61& 3.78e-05 & 3.08 \\
&16 & 2.17e-02 & 0.98 & 8.01e-02 & 0.70 & 5.03e-06 & 2.91 \\

 \hline \cline{2-8}
& 2 & 1.97e-01 & -- & 4.26e-01 &--& 2.30e-03 & -- \\
4&4 & 1.02e-01 & 0.95 & 3.05e-01 & 0.48 & 2.31e-04 & 3.31 \\
&8 & 5.51e-02 & 0.88 & 2.07e-01 & 0.56 & 1.80e-05 & 3.68 \\
&16 & 2.92e-02 & 0.92 & 1.41e-01 & 0.55 & 1.59e-06 & 3.50 \\
  \hline \cline{2-8}
 & 2 & 2.13e-01 & -- & 4.16e-01 & -- & 5.79e-04 & -- \\
5&4 & 1.21e-01 & 0.82 & 3.02e-01 & 0.46 & 4.29e-05 & 3.76 \\
&8 & 6.38e-02 & 0.92& 2.25e-01 & 0.43& 2.89e-06 & 3.89 \\
&16 & 3.20e-02 & 1.00 & 1.70e-01 & 0.40 & 3.11e-07 & 3.22 \\
 \hline
 \end{tabular}
 \end{threeparttable}
\end{table}

\begin{example}\label{Example2} The domain in this test case is the $L$-shape domain, which is 
given by $\Omega=(0,1)^3\backslash\Omega_1$ with 
$\Omega_1=[0,1]\times [-1,0]\times [0,1]$.  We take the coefficient $\epsilon={\rm diag}(1,1,1)$ and 
the singular solution  in $[H^{2/3-\delta}(\Omega)]^3$: 
\[
   {\bf u}=\nabla \times (0,0, r^{2/3}\sin(\frac 23 \theta)). 
\]
Here $r=\sqrt{x^2+y^2}$ and $\theta={\rm arctan}(y/x)+c$ are the cylindrical coordinates.  We take  $c$  such that
${\bf u}\in  H(div)\cap  
H(curl)$. 
\end{example}

  We take the iterative scheme \eqref{iter:1} to solve this paper with the same coefficient choice of $\rho_1,\rho_2$ as Example 1.
   Numerical error and rate of convergence for the $L^p$-PDWG method are listed in  Table \ref{2}, from which 
   we observe an optimal convergence rate ${\mathcal O}(h^{\frac 23})$  for the errors
   $\|\varepsilon^{\frac{1}{q}}  {\bf e}_h\|_{L^q}$  and   $\3bar (e_\lambda, e_{\bm{q}})\3bar $ 
  for $p=2$. While,  as $p$ increases,  the convergence rate for ${\bf u}_h$ is improved 
 from ${\mathcal O}(h^{\frac 23})$ to ${\mathcal O}(h)$ (for $p=5$).  Like in Example 1,  
 the numerical convergence for the dual variables  is faster than the theory predicted in Theorem \ref{THM:ErrorEstimate4Triple}.

\begin{table}[h] %[htbp]
\caption{Numerical error and rate of convergence for the $L^p$-PDWG method for Example 2.}\label{2} \centering
\begin{threeparttable}
        \begin{tabular}{c |c c c c c c c  }
        \hline
       p & $1/h$&   $\|\varepsilon^{\frac{1}{q}}  {\bf e}_h\|_{L^q}$ & rate& $\3bar (e_\lambda, e_{\bm{q}})\3bar $ & rate& $\3bar s_h \3bar$ & rate   \\
       \hline \cline{2-8}
       
&2 & 1.28e-01 & -- & 3.70e-02 & -- & 1.02e-03 & -- \\
2&4 & 8.25e-02 & 0.64 & 2.42e-02 & 0.61 & 3.18e-04 & 1.69 \\
&8 & 5.28e-02 & 0.64 & 1.55e-02 & 0.64& 9.66e-05 & 1.72 \\
&16 & 3.35e-02 & 0.65 & 9.92e-03 & 0.65 & 2.96e-05 & 1.71\\
 \hline \cline{2-8}
 
& 2 & 1.69e-01 & -- & 1.96e-01 & -- & 1.37e-04 & -- \\
3&4 & 9.82e-02 & 0.78& 1.44e-01 & 0.44 & 3.10e-05 & 2.15\\
&8 & 5.77e-02 & 0.77& 1.01e-01 & 0.52 & 6.29e-06 & 2.30 \\
&16 & 3.39e-02 & 0.77 & 6.66e-02 & 0.59 & 1.05e-06 & 2.58 \\

  \hline \cline{2-8}
 & 2 & 2.00e-01 & -- & 3.54e-01 & -- & 2.19e-04 & -- \\
4&4 & 1.18e-01 & 0.76 & 2.63e-01 & 0.43& 2.77e-05 & 2.98\\
&8 & 6.81e-02 & 0.79 & 1.67e-01 & 0.65& 1.79e-06 & 3.95 \\
&16 & 3.65e-02 & 0.90 & 9.79e-02 & 0.77 & 6.91e-08 & 4.70 \\

   \hline \cline{2-8}

&2 & 2.18e-01 & -- & 3.75e-01 & --& 5.47e-05 & -- \\
5&4 & 1.34e-01 & 0.70 & 2.30e-01 & 0.71 & 1.16e-06 & 5.56 \\
&8 & 7.12e-02 & 0.91 & 1.29e-01 & 0.83 & 1.15e-08 & 6.66 \\
&16 & 3.61e-02 & 0.98 & 7.15e-02 & 0.85 & 1.00e-10 & 6.84 \\
\hline
 \end{tabular}
 \end{threeparttable}
\end{table}

\begin{example}\label{Example3} 
In this example, we test  
a singular solution in the following vector potential form on a toroidal domain with $2$ holes: 
$\Omega=[(-1,\frac 32)]^2\backslash \{\Omega_1\cup\Omega_2\}$ with 
$\Omega_1=[-\frac 12,0]^2\times [0,\frac 12]$ and $\Omega_2=[\frac 12,1]\times[-\frac 12,0]\times [0,\frac 12]$. 
We take $\epsilon={\rm diag}(1,1,1)$ and 
\[
   {\bf u}=\nabla \times (0,0, r_1^{\gamma_1}\sin(2 \theta_1)+r_2^{\gamma_2}\sin(2 \theta_2)),
\]
where $(r_i,\theta_i), i=1,2$ are the   cylindrical coordinates centered at a nonconvex corner of the $i$-th hole. 
That is, 
\[
  r_1=\sqrt{x^2+y^2}, \ \theta_1={\rm arctan}(y/x)+c_1,\ \ r_2=\sqrt{(x-1)^2+y^2}, \ \theta_2={\rm arctan}(y/x)+c_2. 
 \]
 In our numerical experiments, we choose $
 \gamma_1 = 1/2, \gamma_2 = 2/3$ such that the vector field is singular near the nonconvex corners of both holes. 
 Note that ${\bf u}\in H^{\frac 12-\delta}$  
 in a neighborhood of the edge $\{x = 0,y = 0\}$, and 
 ${\bf u}\in H^{\frac 23-\delta}$  
 in a neighborhood of the edge $\{x = 1,y = 0\}$. 
 \end{example} 
 
From Table \ref{3},  we observe a convergence rate of  ${\mathcal O}(h^{\frac 12})$ for 
 the error $\|\varepsilon^{\frac{1}{q}}  {\bf e}_h\|_{L^q}$, and   ${\mathcal O}(h^{\frac 1p})$ 
 for the error $\3bar (e_\lambda, e_{\bm{q}})\3bar $  with 
 $p=2,\ldots 5$.   Again, it seems that the convergence rate for $\3bar s_h \3bar$  is better than the  theory 
  predicted in Theorem \ref{THM:ErrorEstimate4Triple}.

 \begin{table}[h] %[htbp]
\caption{Numerical error and rate of convergence for the $L^p$-PDWG method for Example 3.}\label{3} \centering
\begin{threeparttable}
        \begin{tabular}{c |c c c c c c c  }
        \hline
     p & $1/h$&   $\|\varepsilon^{\frac{1}{q}}  {\bf e}_h\|_{L^q}$ & rate& $\3bar (e_\lambda, e_{\bm{q}})\3bar $ & rate& $\3bar s_h \3bar$ & rate   \\
       \hline \cline{2-8}
& 2 & 1.49e-00 & --& 3.48e-00 &-- & 5.16e-01 & -- \\
2&4 & 1.02e-00 & 0.55 & 2.60e-00 & 0.42 & 2.66e-01 & 0.96 \\
&8 & 6.84e-01 & 0.57 & 1.86e-00 & 0.48& 1.01e-01 & 1.39 \\
&16 & 4.69e-01 & 0.55 & 1.30e-00 & 0.51 & 3.49e-02 & 1.54\\
   \hline \cline{2-8}
  & 2 & 1.79e-00 & -- & 6.45e-01 & --& 3.75e-04 & -- \\
3&4 & 1.21e-00 & 0.57 & 5.14e-01 & 0.33 & 1.24e-04 & 1.60 \\
&8 & 8.12e-01 & 0.57 & 4.02e-01 & 0.35 & 4.14e-05 & 1.59 \\
&16 & 5.70e-01 & 0.51 & 3.12e-01 & 0.37 & 1.28e-05 & 1.69\\
  \hline \cline{2-8}   
&2 & 2.07e-00 & -- & 1.22e-00 & --& 5.38e-03 & -- \\
4&4 & 1.41e-00 & 0.55& 1.03e-00 & 0.25 & 1.50e-03 & 1.84\\
&8 & 9.77e-01 & 0.53 & 8.60e-01 & 0.26& 4.30e-04 & 1.80 \\
&16 & 6.78e-01 & 0.53 & 7.17e-01 & 0.26& 1.24e-04 & 1.80 \\

 \hline \cline{2-8}   
&2 & 2.18e-00 & -- & 9.54e-01 &-- & 9.98e-04 & -- \\
5&4 & 1.57e-00 & 0.47& 8.30e-01 & 0.20 & 2.77e-04 & 1.82 \\
&8 & 1.06e-00 & 0.56& 7.19e-01 & 0.21& 7.67e-05 & 1.85\\
&16 & 7.33e-01 & 0.53 & 6.22e-01 & 0.21 & 2.12e-05 & 1.86 \\
\hline
 \end{tabular}
 \end{threeparttable}
\end{table}

\begin{example} \label{example4}
In this test, we consider
a singular solution in the following vector potential form on a toroidal domain with $1$ holes: 
$\Omega=[(-1,\frac 12)]^2\times [0,\frac 12]\backslash \Omega_1$ with 
$\Omega_1=[-\frac 12,0]\times [0,\frac 12]^2$. 
We take $\epsilon={\rm diag}(1,1,1)$ and 
\[
   {\bf u}=\nabla \times (0,0, r^{\gamma}\sin(2 \theta)). 
\]
  We consider three cases: $\gamma=5/4, 1, 2/3$, where the regularity of the exact solution ranges  from smooth to singular. 
\end{example}
   The coefficients $\rho_1,\rho_2$ in \eqref{iter:1} are chosen as following:  $\rho_1=\rho_2=1$ for $p=2$, $\rho_1=\rho_2=3\times 10^3$  for $p=3$ and 
   $\rho_1=\rho_2=3\times 10^4$  for $p\ge 4$. Numerical error and rate of convergence for the $L^p$-PDWG method with different $\gamma$ are listed in 
   Tables \ref{4}-\ref{6}.  We observe the following result: 
   \begin{itemize} 
    \item Regular solution, i.e., $\gamma=5/4$, $\|\varepsilon^{\frac{1}{q}}  {\bf e}_h\|_{L^q}$  has optimal convergence rate 
   ${\mathcal O}(h)$ for all $p\ge 2$,  $\3bar (e_\lambda, e_{\bm{q}})\3bar $ has optimal convergence rate 
   ${\mathcal O}(h)$ for $p=2$,  and $p$-dependency rate when $p\ge 3$, which is slightly better than the theoretical rate ${\mathcal O}(h^{\frac qp})$. 
   As for $s_h$, a superconvergent order still observed in this cases.  
   \item Singular case $\gamma=1$, where ${\bf u}\in H^{1-\delta}$ and ${\bf u}\notin H(curl)$. 
   An asymptotical rate of ${\mathcal O}(h^{0.9}$ is observed for the error 
  $\|\varepsilon^{\frac{1}{q}}  {\bf e}_h\|_{L^q}$   with all $p\ge 2$.  The convergence behavior for the auxiliary variable are the same as that for the 
   regular case $\gamma=\frac 54$. 
   \item Singular case $\gamma=2/3$, where ${\bf u}\in H^{\frac 23-\delta}$ and ${\bf u}\notin H(curl)$.   
   $\|\varepsilon^{\frac{1}{q}}  {\bf e}_h\|_{L^q}$ has an optimal convergence rate 
   ${\mathcal O}(h^{\frac 23})$ for all $p\ge 2$, 
   and $\3bar (e_\lambda, e_{\bm{q}})\3bar $ shows a rate of ${\mathcal O}(h^{0.6})$ for $p=2$ and
   ${\mathcal O}(h^{\frac q p})$ for 
     all $p\ge 3$. 
   \end{itemize}

\begin{table}[h] %[htbp]
\caption{Numerical error and rate of convergence for the $L^p$-PDWG method with $\gamma=5/4$ for Example 4.}\label{4} \centering
\begin{threeparttable}
        \begin{tabular}{c |c c c c c c c  }
        \hline
     p & $1/h$&   $\|\varepsilon^{\frac{1}{q}}  {\bf e}_h\|_{L^q}$ & rate& $\3bar (e_\lambda, e_{\bm{q}})\3bar $ & rate& $\3bar s_h \3bar$ & rate   \\
       \hline \cline{2-8}
&2 & 3.96e-01 &-- & 9.07e-01 & --& 6.51e-02 & -- \\
2&4 & 2.07e-01 & 0.93 & 5.01e-01 & 0.86 & 3.23e-02 & 1.01 \\
&8 & 1.06e-01 & 0.97 & 2.67e-01 & 0.91 & 9.48e-03 & 1.77 \\
&16 & 5.38e-02 & 0.98 & 1.39e-01 & 0.95 & 2.17e-03 & 2.12 \\
   \hline \cline{2-8}
&2 & 4.61e-01 & -- & 4.14e-01 & -- & 4.17e-04 & -- \\
3&4 & 2.39e-01 & 0.95 & 2.88e-01 & 0.53 & 1.05e-04 & 1.99 \\
&8 & 1.27e-01 & 0.91& 1.94e-01 & 0.57 & 2.20e-05 & 2.25 \\
&16 & 6.65e-02 & 0.94 & 1.25e-01 & 0.63 & 4.23e-06 & 2.38 \\

 \hline \cline{2-8}
&2 & 4.92e-01 & -- & 5.92e-01 & -- & 5.23e-04 & -- \\
4&4 & 2.67e-01 & 0.88 & 4.29e-01 & 0.46 & 1.05e-04 & 2.31 \\
&8 & 1.41e-01 & 0.92& 3.04e-01 & 0.50 & 1.78e-05 & 2.57 \\
&16 & 7.23e-02 & 0.97& 2.16e-01 & 0.50 & 2.74e-06 & 2.70 \\
 \hline \cline{2-8}
 &2 & 5.33e-01 & --& 8.43e-01 & -- & 3.56e-03 &-- \\
5&4 & 2.93e-01 & 0.86 & 6.50e-01 & 0.38 & 6.15e-04 & 2.53 \\
&8 & 1.50e-01 & 0.97 & 4.98e-01 & 0.39 & 1.04e-04 & 2.56 \\
&16 & 7.55e-02 & 0.99 & 3.80e-01 & 0.39 & 1.54e-05 & 2.75 \\
 \hline
 \end{tabular}
 \end{threeparttable}
\end{table}

\begin{table}[h] %[htbp]
\caption{Numerical error and rate of convergence for the $L^p$-PDWG method with $\gamma=1$ for Example 4.}\label{5} \centering
\begin{threeparttable}
        \begin{tabular}{c |c c c c c c c  }
        \hline
     p & $1/h$&   $\|\varepsilon^{\frac{1}{q}} {\bf e}_h\|_{L^q}$ & rate& $\3bar (e_\lambda, e_{\bm{q}})\3bar $ & rate& $\3bar s_h \3bar$ & rate   \\
       \hline \cline{2-8}
&2 & 5.33e-01 & --& 1.22e-00 & -- & 1.14e-01 & -- \\
2&4 & 3.01e-01 & 0.83 & 7.27e-01 & 0.74 & 5.65e-02 & 1.01 \\
&8 & 1.63e-01 & 0.88& 4.15e-01 & 0.81 & 1.79e-02 & 1.66 \\
&16 & 8.86e-02 & 0.88 & 2.30e-01 & 0.85 & 4.62e-03 & 1.96 \\
 \hline \cline{2-8}
&2 & 6.02e-01 & --& 5.84e-01 & -- & 1.22e-03 & -- \\
3&4 & 3.29e-01 & 0.87 & 3.98e-01 & 0.55 & 2.55e-04 & 2.26 \\
&8 & 1.87e-01 & 0.82 & 2.78e-01 & 0.52& 5.63e-05 & 2.18 \\
&16 & 1.03e-01 & 0.86 & 1.88e-01 & 0.56 & 1.22e-05 & 2.21 \\
 \hline \cline{2-8}
 &2 & 6.50e-01 & -- & 7.57e-01 & -- & 1.26e-03 & -- \\
4&4 & 3.80e-01 & 0.77 & 5.68e-01 & 0.41 & 2.52e-04 & 2.32 \\
&8 & 2.14e-01 & 0.83 & 4.19e-01 & 0.44 & 4.91e-05 & 2.36 \\
&16 & 1.13e-01 & 0.93 & 3.07e-01 & 0.45& 9.16e-06 & 2.42 \\
 \hline \cline{2-8}
 &2 & 6.88e-01 & -- & 8.31e-01 & --& 1.60e-03 & -- \\
5&4 & 4.14e-01 & 0.73 & 6.57e-01 & 0.34 & 3.09e-04 & 2.38 \\
&8 & 2.26e-01 & 0.87 & 5.16e-01 & 0.35 & 5.79e-05 & 2.41 \\
&16 & 1.19e-01 & 0.93 & 4.04e-01 & 0.35 & 1.01e-05 & 2.52 \\
 \hline
 \end{tabular}
 \end{threeparttable}
\end{table}

\begin{table}[h] %[htbp]
\caption{Numerical error and rate of convergence for the $L^p$-PDWG method with $\gamma=2/3$ for Example 4.}\label{6} \centering
\begin{threeparttable}
        \begin{tabular}{c |c c c c c c c  }
         \hline
      p & $1/h$&   $\|\varepsilon^{\frac{1}{q}}  {\bf e}_h\|_{L^q}$ & rate& $\3bar (e_\lambda, e_{\bm{q}})\3bar $ & rate& $\3bar s_h \3bar$ & rate   \\
       \hline \cline{2-8}
&2 & 8.87e-01 & -- & 2.09e-00 & -- & 2.77e-01 & -- \\
2&4 & 5.77e-01 & 0.62 & 1.45e-00 & 0.52 & 1.38e-01 & 1.01 \\
&8 & 3.62e-01 & 0.68 & 9.67e-01 & 0.59 & 4.92e-02 & 1.49 \\
&16 & 2.29e-01 & 0.66& 6.26e-01 & 0.63& 1.55e-02 & 1.67 \\
 \hline \cline{2-8}
 
 &2 & 1.02e-00 & --& 8.04e-01 & -- & 2.07e-03 & -- \\
3&4 & 6.16e-01 & 0.73 & 5.77e-01 & 0.48 & 4.91e-04 & 2.08 \\
&8 & 3.94e-01 & 0.64 & 4.37e-01 & 0.40 & 1.39e-04 & 1.82 \\
&16 & 2.47e-01 & 0.67 & 3.28e-01 & 0.42 & 4.21e-05 & 1.73 \\

 \hline \cline{2-8}
&2 & 1.07e-00 & -- & 1.06e-00 & -- & 4.46e-03 & -- \\
4&4 & 7.10e-01 & 0.59 & 8.64e-01 & 0.30 & 1.13e-03 & 1.98 \\
&8 & 4.63e-01 & 0.62& 6.93e-01 & 0.32 & 2.86e-04 & 1.98 \\
&16 & 2.90e-01 & 0.67 & 5.53e-01 & 0.32 & 7.19e-05 & 1.99 \\

\hline \cline{2-8}

&2 & 1.11e-00 & -- & 1.09e-00 &-- & 5.79e-03 & -- \\
5&4 & 7.73e-01 & 0.52 & 9.19e-01 & 0.25 & 1.41e-03 & 2.04 \\
&8 & 4.94e-01 & 0.65& 7.71e-01 & 0.25 & 3.44e-04 & 2.03 \\
&16 & 3.09e-01 & 0.68 & 6.45e-01 & 0.26 & 8.25e-05 & 2.06 \\

 \hline
 \end{tabular}
 \end{threeparttable}
\end{table}

 \begin{example}\label{example5} 
In this test, we reveal some computational results for 
a test problem where the existence of  a harmonic vector field has effect on convergence rate. 
We consider the problem 
 on a toroidal  domain with $ 2$ holes,  which is the same as that in Example \ref{Example3}. 
We take $\epsilon={\rm diag}(1,1,1)$ and 
\[
   {\bf u}=\nabla \times (0,0, r_1^{\gamma_1}\sin( \theta_1)+r_2^{\gamma_2}\sin( \theta_2))+\beta  \left( \begin{array}{cc}
   e^y\sin(z) \\
    e^x\sin(z) \\
  z \\
 \end{array}
\right )
\]  
  with $(\gamma_1,\gamma_2,\beta)=(\frac 45, \frac 23,\frac 1{40})$. 
\end{example}
   The coefficients $\rho_1,\rho_2$ in \eqref{iter:1} are chosen as following:  $\rho_1=\rho_2=1$ for $p=2$, $\rho_1=\rho_2= 5\times 10^4$  for $p\ge 3$.  
   The plot of the vector field ${\bf\eta}_h$ is provided in Figure \ref{fig:1}, and the 
  errors and rates of convergence of the $L^p$-PDWG method for the primal variable and the dual variables are given in  Table \ref{10}. 
As indicated by Theorem \ref{THM:ErrorEstimate4uh},  the numerical solution ${\bf u}_h$ approximates the exact solution ${\bf u}$, up to a harmonic field. 
As we may observe from Figure \ref{fig:1}, the vector field ${\bf \eta}_h$  is an approximate harmonic field with normal boundary condition.
Furthermore, due to the presence of the harmonic field vector, the error ${\bf \eta}_h$ or ${\bf e}_h$ may not exhibit a convergence. Our numerical result in 
Table \ref{10} verifies this point.  We do not observe  any convergence for the vector field ${\bf u}$.  It is noteworthy that although 
 the vector field ${\bf u}_h$ is not convergent to  ${\bf u}$ while our iterative algorithm is still convergent. We list in Table \ref{10} that the iterative number 
 used in the iterative procedure.  As for the dual variable $\lambda_h, {\bf q}_h$,  we observe a rate of ${\mathcal O}(h^{\frac qp})$ for $p\ge 3$.
  The numerical performance is in consistency with our theory as established in Theorem  \ref{THM:ErrorEstimate4Triple} 
   for the convergence of  $e_{\lambda}, e_{\bf q}$.  The convergence rate for $s_h$ is still better than the one given in \eqref{er2}.

\begin{figure}[htb]
  \centering
  % Requires \usepackage{graphicx}
  \includegraphics[width=.32\textwidth]{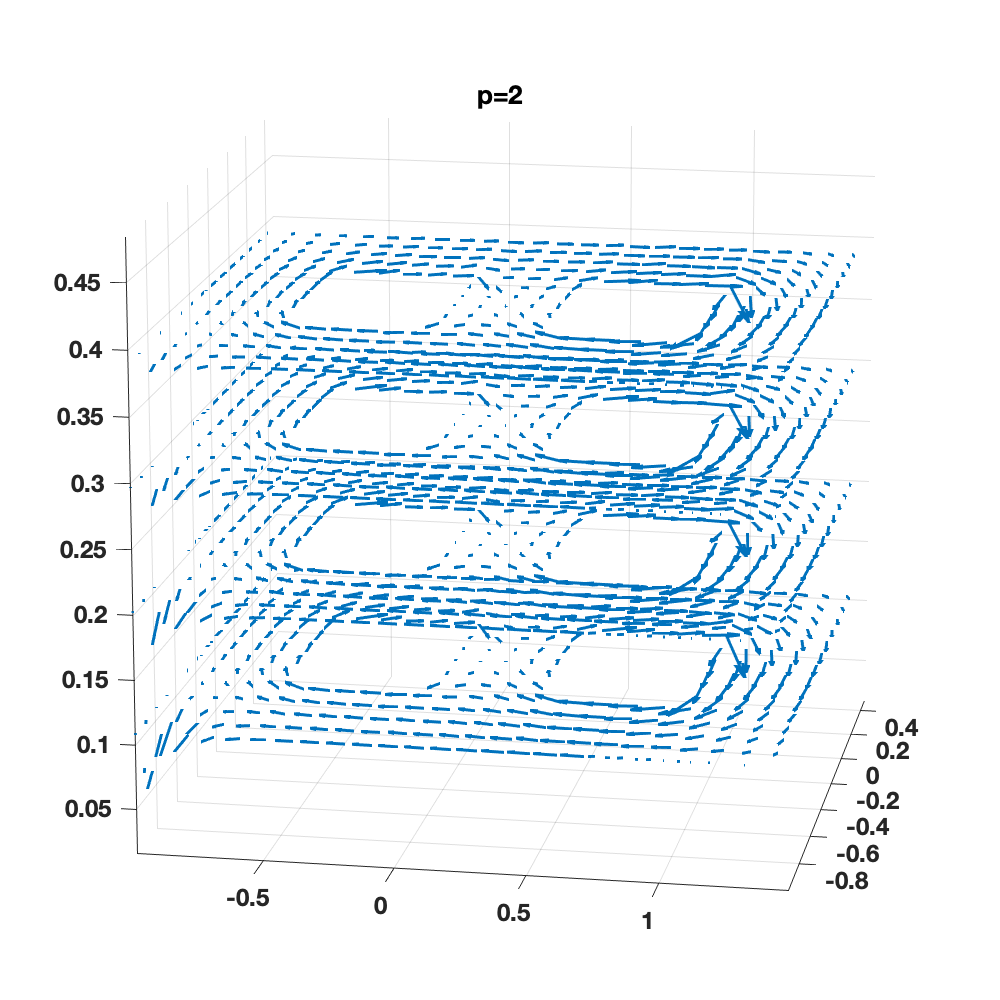}
  \includegraphics[width=.32\textwidth]{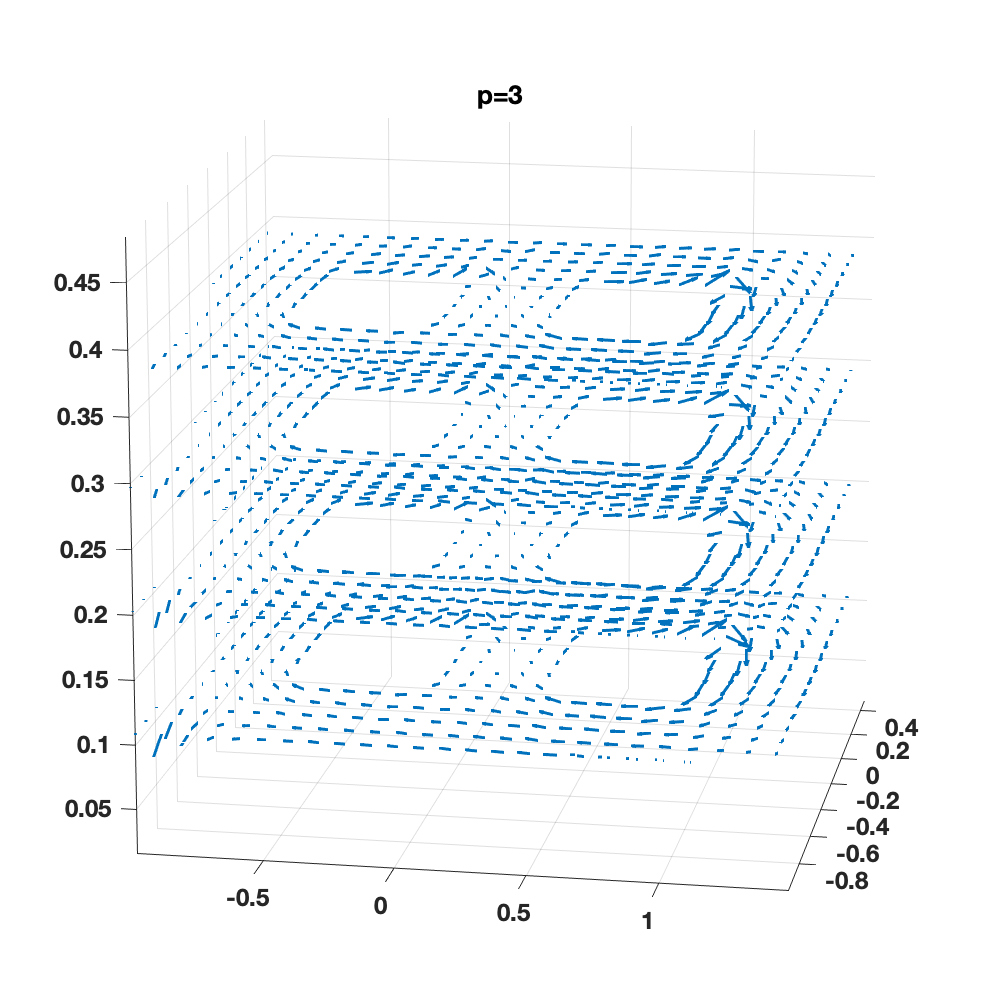}\\
   \includegraphics[width=.32\textwidth]{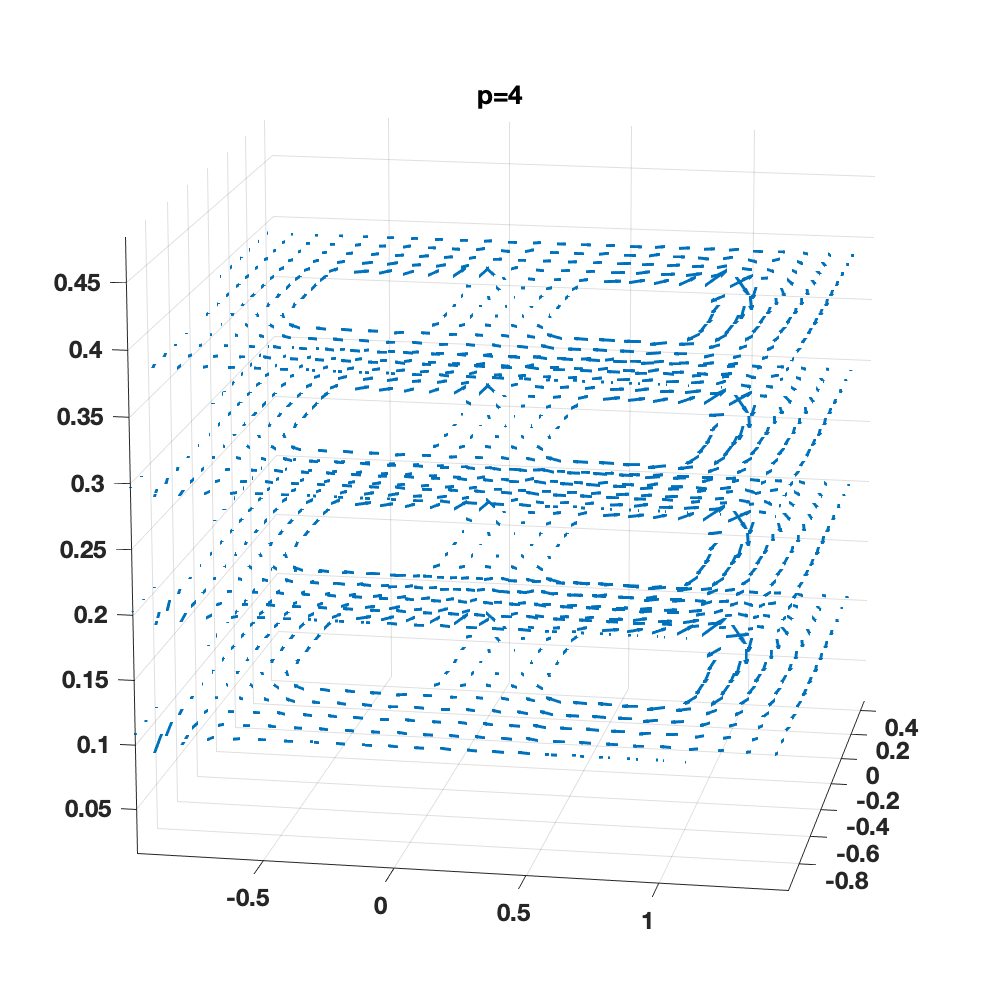}
   \includegraphics[width=.32\textwidth]{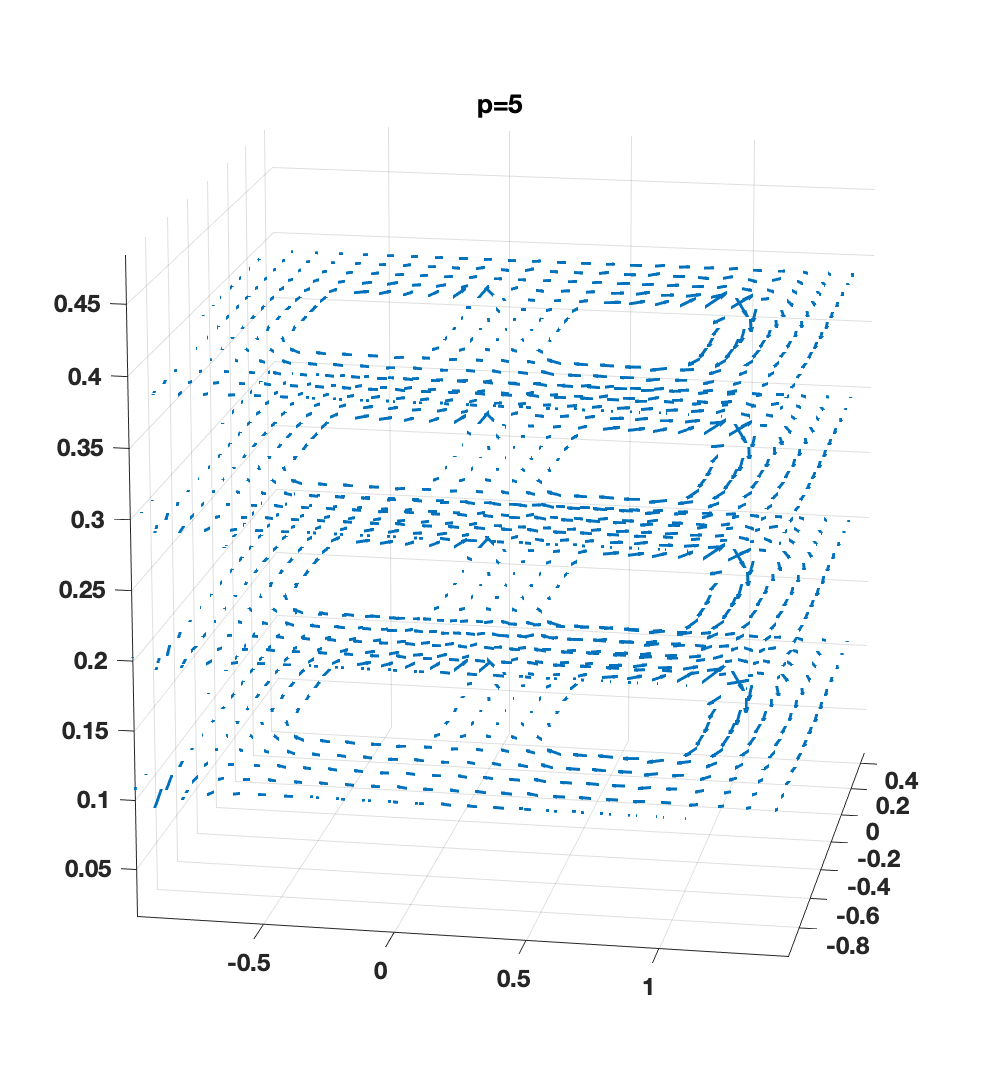}
 \caption{Plot of the  vector fields ${\bf \eta}_h$ calculated by the $L^p$-PDWG method of Example 5 with different values of $p$.}
  \label{fig:1}
\end{figure}

\begin{table}[h] %[htbp]
\caption{Numerical error and rate of convergence for the $L^p$-PDWG method for Example 5.}\label{10} \centering
\begin{threeparttable}
        \begin{tabular}{c |c c c c c c c cc c }
         \hline
    p & $1/h$&   $\|\varepsilon^{\frac{1}{q}}  {\bf e}_h\|_{L^q}$ & rate& $\|\varepsilon^{\frac{1}{q}}  {\bf \eta}_h\|_{L^q}$ &rate& $\3bar (e_\lambda, e_{\bm{q}})\3bar $ & rate& $\3bar s_h \3bar$ & rate & It.   \\
       \hline \cline{2-11}
&2 & 2.60e-01 & -- & 1.90e-01 & -- & 3.79e-01 & -- & 5.83e-02 & -- & 1 \\
2&4 & 2.03e-01 & 0.36 & 1.69e-01 & 0.17 & 2.55e-01 & 0.57 & 2.93e-02 & 0.99 & 1 \\
&8 & 1.70e-01 & 0.26 & 1.54e-01 & 0.13 & 1.67e-01 & 0.61 & 1.03e-02 & 1.51 & 1 \\
&16 & 1.53e-01 & 0.15 & 1.46e-01 & 0.08& 1.07e-01 & 0.64 & 3.26e-03 & 1.66 & 1 \\
\hline \cline{2-11}
&2 & 2.78e-01 & -- & 2.16e-01 & -- & 1.35e-01 & -- & 1.52e-05 & -- & 14 \\
3&4 & 2.28e-01 & 0.29 & 2.02e-01 & 0.09 & 9.93e-02 & 0.44 & 4.98e-06 & 1.60 & 15 \\
&8 & 1.98e-01 & 0.20 & 1.88e-01 & 0.11 & 7.11e-02 & 0.48 & 1.20e-06 & 2.06 & 18 \\
&16 & 1.80e-01 & 0.14 & 1.76e-01 & 0.09 & 5.06e-02 & 0.49 & 2.88e-07 & 2.06 & 19 \\
\hline \cline{2-11}
& 2 & 3.12e-01 & -- & 2.55e-01 & -- & 3.98e-01 & -- & 1.60e-04 & 0 & 18 \\
4&4 & 2.62e-01 & 0.25 & 2.40e-01 & 0.09 & 3.12e-01 & 0.35 & 4.54e-05 & 1.82 & 28 \\
&8 & 2.25e-01 & 0.22& 2.15e-01 & 0.15 & 2.45e-01 & 0.35 & 1.08e-05 & 2.07 & 28 \\
&16 & 1.99e-01 & 0.18 & 1.95e-01 & 0.14 & 1.93e-01 & 0.34 & 2.60e-06 & 2.05 & 26 \\
\hline \cline{2-11}
&2 & 3.42e-01 & -- & 2.87e-01 & --& 6.12e-01 & -- & 8.62e-04 & -- & 29 \\
5&4 & 2.84e-01 & 0.27 & 2.63e-01 & 0.13 & 5.05e-01 & 0.28 & 2.32e-04 & 1.89 & 23 \\
&8 & 2.37e-01 & 0.26 & 2.28e-01 & 0.20 & 4.19e-01 & 0.27& 5.59e-05 & 2.06 & 24 \\
&16 & 2.08e-01 & 0.19 & 2.04e-01 & 0.16 & 3.48e-01 & 0.27 & 1.31e-05 & 2.09 & 30 \\
  \hline
 \end{tabular}
 \end{threeparttable}
\end{table}

\begin{example}
In this test, 
we consider the problem 
 on a toroidal  domain with $ 1$ holes,  which is the same as that in Example \ref{example4}. 
We take $\epsilon={\rm diag}(1,1,1)$ and 
\[
   {\bf u}=\nabla \times (0,0, r^{\gamma}\sin( \theta)+\beta  \left( \begin{array}{cc}
 \sin(\pi x)\cos(\pi y)\sin(\pi z) \\
   \cos(\pi x)\sin(\pi y)\sin(\pi z) \\
  0 \\
 \end{array}
\right )
\]  
  with $(\gamma,\beta)=( \frac 23,\frac 1 8)$. 
\end{example}
We take the iterative parameters as those in Example \ref{example5}, and plot the 
 the vector field $\eta_h$ in Figure \ref{fig:1}, and  present  in Table \ref{11} the
  errors and rates of convergence for the primal variable and the dual variables approximation.  
  Just the same as that in Example \ref{example5},  the numerical results do not demonstrate any convergence for the vector field ${\bf u}$, 
  while show a  rate of ${\mathcal O}(h^{\frac qp})$ for $\3bar (e_\lambda, e_{\bm{q}})\3bar $ and a supercovnergence rate  ${\mathcal O}(h^2)$ for 
  $\3bar s_h \3bar$ when $p\ge 3$.  Again we observe that the iterative scheme is still convergent and 
  the vector field ${\bf\eta}_h$  is an approximate harmonic field with normal boundary condition.

\begin{figure}[htb]
  \centering
  % Requires \usepackage{graphicx}
  \includegraphics[width=.32\textwidth]{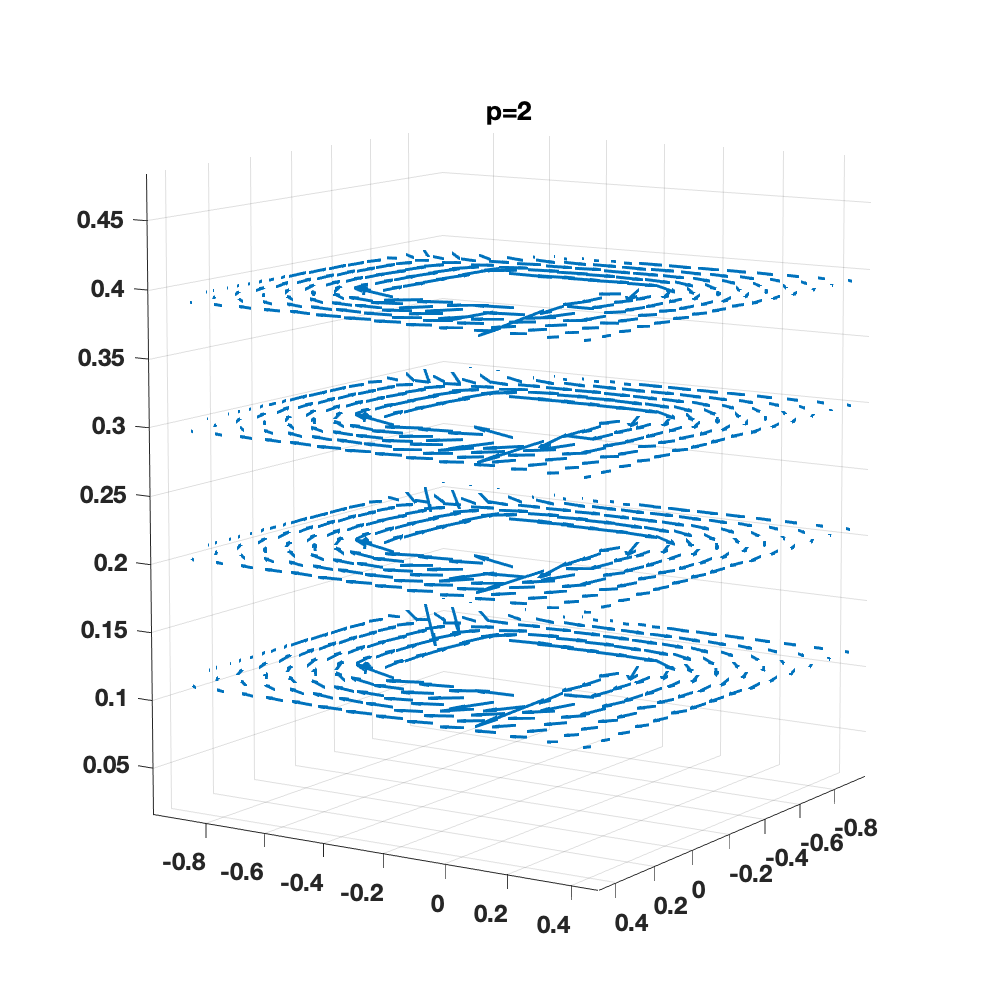}
  \includegraphics[width=.32\textwidth]{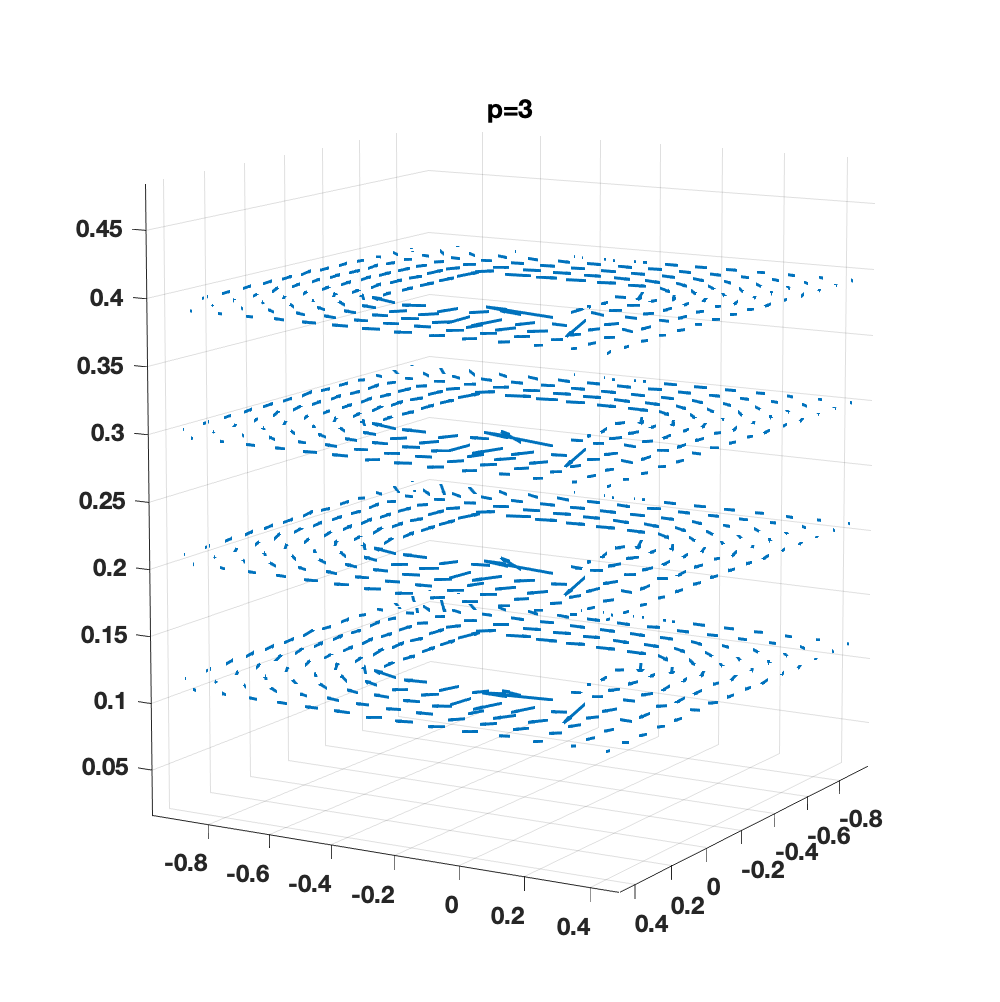}\\
   \includegraphics[width=.32\textwidth]{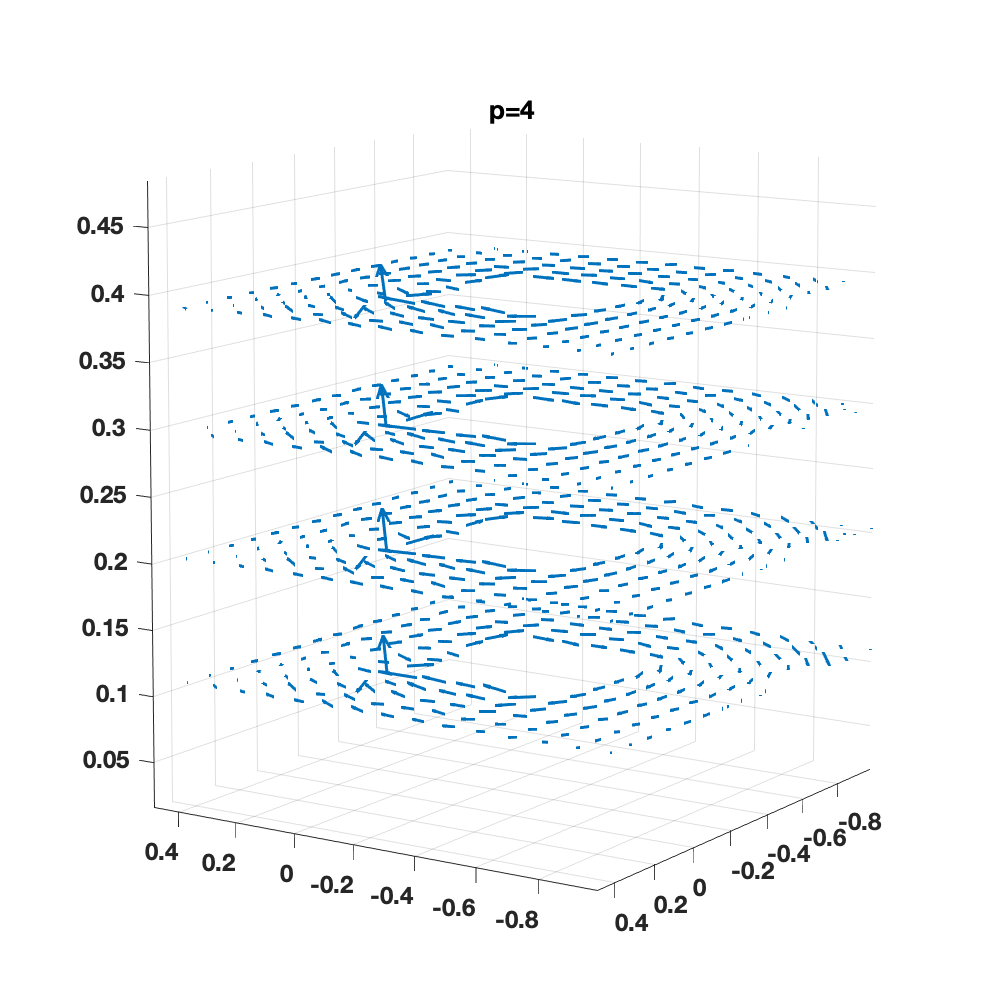}
   \includegraphics[width=.32\textwidth]{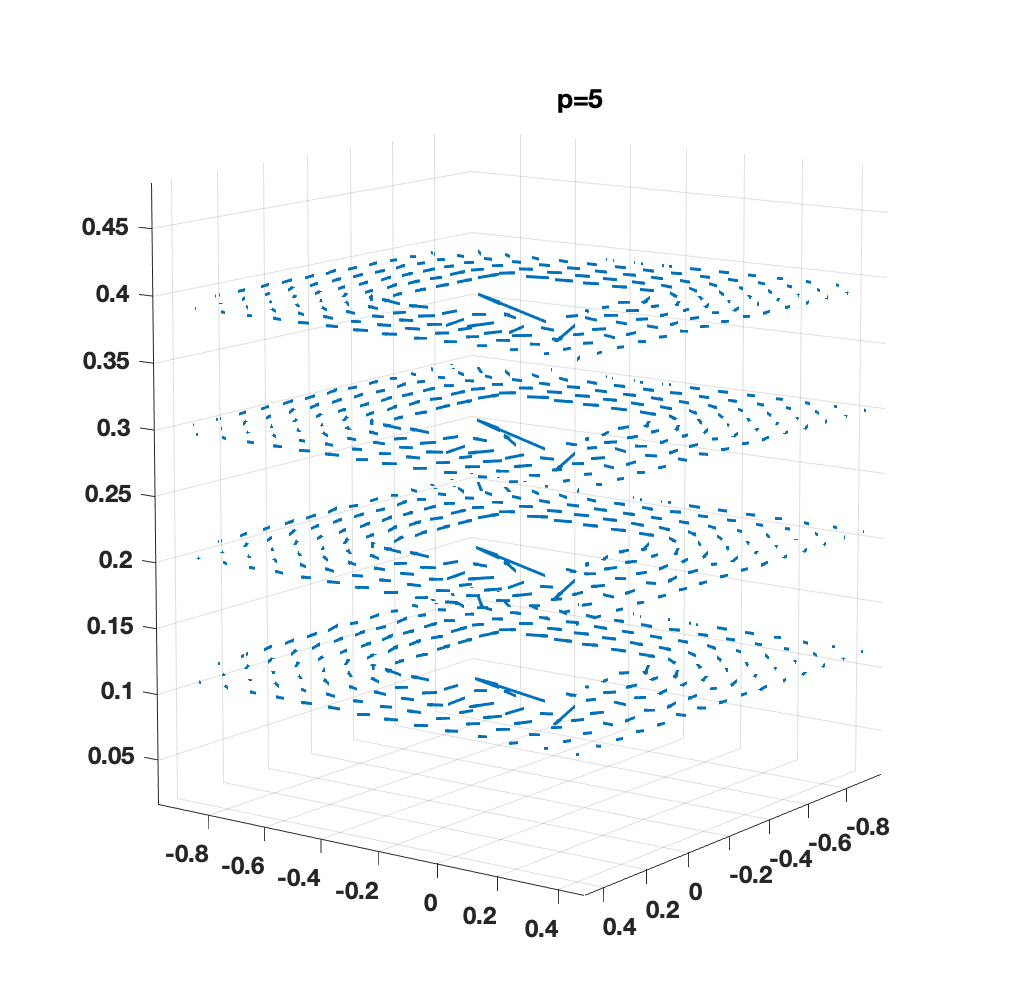}
 \caption{Plot of the  vector fields ${\bf \eta}_h$ calculated by the $L^p$-PDWG method of Example 5 with different values of $p$.}
  \label{fig:14}
\end{figure}

\begin{table}[h] %[htbp]
\caption{Numerical error and rate of convergence for the $L^p$-PDWG method  for Example 6.}\label{11} \centering
\begin{threeparttable}
        \begin{tabular}{c |c c c c c c c cc c }
         \hline
    p & $1/h$&   $\|\varepsilon^{\frac{1}{q}}  {\bf e}_h\|_{L^q}$ & rate& $\|\varepsilon^{\frac{1}{q}}  {\bf \eta}_h\|_{L^q}$ &rate& $\3bar (e_\lambda, e_{\bm{q}})\3bar $ & rate& $\3bar s_h \3bar$ & rate & It.   \\
       \hline \cline{2-11}
&2 & 2.18e-01 & -- & 1.61e-01 & -- & 2.98e-01 & --& 5.18e-02 &-- & 1 \\
2&4 & 1.75e-01 & 0.32 & 1.45e-01 & 0.15 & 2.16e-01 & 0.46 & 2.67e-02 & 0.95 & 1 \\
&8 & 1.47e-01 & 0.25 & 1.33e-01 & 0.13 & 1.47e-01 & 0.56 & 9.49e-03 & 1.50 & 1 \\
&16 & 1.33e-01 & 0.15 & 1.26e-01 & 0.07& 9.61e-02 & 0.61 & 3.04e-03 & 1.64 & 1 \\
 \hline \cline{2-11}

&2 & 2.14e-01 & -- & 1.67e-01 & -- & 1.14e-01 & -- & 1.19e-05 & --& 14 \\
3&4 & 1.81e-01 & 0.24 & 1.60e-01 & 0.06& 8.85e-02 & 0.37 & 4.29e-06 & 1.48 & 16 \\
&8 & 1.58e-01 & 0.19 & 1.49e-01 & 0.10 & 6.51e-02 & 0.44 & 1.05e-06 & 2.03 & 18 \\
&16 & 1.45e-01 & 0.13& 1.41e-01 & 0.08& 4.72e-02 & 0.47 & 2.58e-07 & 2.03 & 19 \\

 \hline \cline{2-11}

&2 & 2.28e-01 & -- & 1.85e-01 & -- & 3.56e-01 & -- & 1.27e-04 & -- & 18 \\
4&4 & 1.99e-01 & 0.20& 1.81e-01 & 0.03 & 2.89e-01 & 0.30& 3.89e-05 & 1.7 & 30 \\
&8 & 1.73e-01 & 0.20 & 1.66e-01 & 0.13 & 2.30e-01 & 0.33 & 9.44e-06 & 2.04 & 28 \\
&16 & 1.54e-01 & 0.17 & 1.51e-01 & 0.13 & 1.84e-01 & 0.33 & 2.33e-06 & 2.02 & 26 \\

 \hline \cline{2-11}

&2 & 2.42e-01 & -- & 2.01e-01 & -- & 5.63e-01 & -- & 7.06e-04 & -- & 18 \\
5&4 & 2.11e-01 & 0.20 & 1.94e-01 & 0.05 & 4.75e-01 & 0.24 & 1.99e-04 & 1.83 & 24 \\
&8 & 1.79e-01 & 0.23 & 1.72e-01 & 0.17 & 3.99e-01 & 0.25 & 4.86e-05 & 2.03 & 24 \\
&16 & 1.58e-01 & 0.18 & 1.55e-01 & 0.15 & 3.35e-01 & 0.26 & 1.16e-05 & 2.06 & 31 \\
\hline
 \end{tabular}
 \end{threeparttable}
\end{table}

\end{document}